\newcommand{\al}{\alpha}
\newcommand{\be}{\beta}
\newcommand{\ga}{\gamma}
\newcommand{\de}{\delta}
\newcommand{\la}{\lambda}
\newcommand{\om}{\omega}
\newcommand{\vv}{\varphi}
\newcommand{\iy}{\infty}
\theoremstyle{plain}
\newtheorem{thm}{Theorem}
\newtheorem{lem}{Lemma}
\newtheorem{cor}{Corollary}
\theoremstyle{definition}
\theoremstyle{remark}
\DeclareMathOperator*{\Res}{Res}
\begin{document}

\begin{center}
{\large\bf Matrix Sturm-Liouville equation with a Bessel-type singularity on a finite interval
}
\\[0.2cm]
{\bf Natalia Bondarenko} \\[0.2cm]
\end{center}

\vspace{0.5cm}

{\bf Abstract.} The matrix Sturm-Liouville equation on a finite interval with a Bessel-type singularity in the end of the interval
is studied. Special fundamental systems of solutions for this equation are constructed: analytic Bessel-type solutions with the prescribed behavior at the singular point and Birkhoff-type solutions with the known asymptotics for large values of the spectral parameter. The asymptotic formulas for Stokes multipliers, connecting these two fundamental systems of solutions, are derived. We also set boundary conditions and obtain asymptotic formulas for the spectral data (the eigenvalues and the weight matrices) of the boundary value problem. Our results will be useful in the theory of direct and inverse spectral problems.

\medskip

{\bf Keywords:} matrix Sturm-Liouville equation, Bessel-type type singularity, spectral analysis, boundary value problem

\medskip

{\bf AMS Mathematics Subject Classification (2010):} 34L40 34B09 34A36 34L20 47E05 

\vspace{1cm}

{\large \bf 1. Introduction}

\medskip

Consider the matrix Sturm-Liouville equation on a finite interval with a Bessel-type singularity
in the end of the interval
\begin{equation} \label{eqv}
 	-Y'' + \left( \frac{\omega}{x^2} + Q(x) \right) Y = \lambda Y, \quad x \in (0, T). 
\end{equation}
Here $Y = [y_k(x)]_{k = 1}^m$ is a vector function, $\lambda$ is the spectral parameter,
$Q(x)$ and $\omega$ are $m \times m$ matrices. 

We assume that the matrix $\omega$ is diagonal, i.e.
$\omega = \mbox{diag}\{\omega_1, \omega_2, \dots, \omega_m \}$, $\omega_q \in \mathbb{R}$, $q = \overline{1, m}$.
If $\om$ is an arbitrary Hermitian matrix, one can apply the standard unitary transform, in order to fulfill this condition.
For definiteness, let
$\omega_q = \nu_q^2 - \frac{1}{4}$, $\nu_1 \ge \nu_2 \ge \dots \ge \nu_m > 0$, $\nu_q \notin \mathbb{N}$, $q = \overline{1, m}$.
Let the matrix function $x^{1 - 2\nu_1} Q(x)$ be integrable on $(0, T)$. 

In this paper, we construct special fundamental systems of solutions (FSS) for equation \eqref{eqv} and study asymptotic behavior of the spectral data. Our results will be useful in the theory of direct and inverse spectral problems 
for systems with Bessel-type singularities.

Differential equations with regular singularities are studied intensively in recent years. 
The direct and inverse spectral problems for scalar analogue of equation \eqref{eqv} ($m = 1$) have been investigated in \cite{FY05-1} and \cite{FY05-2}, 
respectively, even for the more general case of the singularities in the both ends of the interval. 
These problems arose from the analysis of the differential equations with turning points. Such equations have applications 
in electronics and geophysics (see, for example, \cite{FY99, And97, LU81}). The case of singularities inside the interval 
has also been investigated \cite{Yur97, Fed13}. In papers \cite{GYS15, GY15} the Dirac system with regular singularity was studied. The papers \cite{Ign15, Yur15} are devoted to inverse problems for differential operators with singularities on geometrical graphs. 

The approach in the mentioned papers is based on the special FSS, constructed in \cite{Yur-depon, Yur92} for higher-order differential equations with the regular singularity. The first FSS is formed by Bessel-type solutions, which are analytic in $\la$
and have the prescribed asymptotic behavior at the singular point. The second FSS consists of Birkhoff-type solutions 
with the known behavior for large values of the spectral parameter. The two FSS are connected by
the Stokes multipliers. The asymptotics for the Stokes multipliers play a crucial role in the spectral theory for
operators with singularities, since they give an opportunity to analyze the behavior of solutions and spectral characteristics for these operators.

The matrix equation \eqref{eqv} with the Bessel-type singularity have been studied in \cite{AM60},
where inverse scattering problem, related to quantum mechanics, was solved.
However, the method of \cite{AM60}, based on special non-integral 
transforms, removing singularities, works only in some particular cases. In paper \cite{Bond15}, 
the inverse spectral problem for equation \eqref{eqv} on the finite interval was studied in the more general case.
The uniqueness theorem was proved.

In the present paper, we continue the research, started in \cite{Bond15}. We provide the construction of the 
FSS for equation \eqref{eqv} and derive asymptotic formulas for the Stokes multipliers, which appeared in the short note 
\cite{Bond15} without proofs. Further we set the boundary value problem for equation \eqref{eqv} and obtain 
the asymptotics for its spectral data, consisting of the eigenvalues and so-called weight matrices.
The weight matrices equal to the residues of the Weyl matrix, they generalize the notion of the weight numbers for the scalar Sturm-Liouville equation (see \cite{FY01}). The spectral data, considered in this paper, are natural spectral characteristics,
by which a matrix Sturm-Liouville operator can be recovered. The inverse problem by the spectral data
for the matrix Sturm-Liouville equation without singularities was studied, for example, in \cite{Bond11} 
(see also references therein). The asymptotics for the eigenvalues and for the weight matrices can further be used 
for the constructive solution of the inverse problem for equation \eqref{eqv}. We note that the asymptotic
analysis of the spectral data of the considered problem involves difficulties, related with the 
asymptotic closeness of large eigenvalues. Therefore one can not calculate asymptotics of the weight matrices separately,
but can only analyze the sums of the residues, corresponding to some contours. Such approach allowed to solve inverse problems for matrix Sturm-Liouville operators in \cite{Bond11, Bond15-2}.

The paper is organized as follows. For the convenience of the reader, we provide the standard results on the 
FSS for the scalar Sturm-Liouville equation with the Bessel singularity in Section 2. In Section~3, we construct the Bessel-type and the 
Birkhoff-type solutions for the matrix equation \eqref{eqv}. In Section~4, we establish the connection between
two types of the solutions and obtain asymptotics for the Stokes multipliers. 
In Section~5, the boundary conditions are set for equation \eqref{eqv}, and the notion of the spectral data is introduced.
We derive asymptotic formulas for the eigenvalues and the weight matrices in Sections~6 and~7, respectively.	

Let us introduce the notation. Denote $\rho := \sqrt \la$, $\mbox{Re}\,\rho \ge 0$, $\tau := \mbox{Im}\,\rho$. 
Denote the unit $m \times m$ matrix by $I$, its $q$-th column by $e_q$ and its $q$-th row by $e_q^*$. 
We use the following matrix norm: $\| A \| = \max\limits_{j,k = \overline{1, m}} |a_{jk}|$, $A = [a_{jk}]_{j, k = 1}^m$.
The Wronskian $\langle Z, Y \rangle := Z Y' - Z' Y$ can be used for both scalar and matrix functions. 
In asymptotics, we use the symbol $[I]_{\beta} := I + O(\rho^{-\beta})$, where $\beta := \min\{ 1, 2 \nu_1 \}$. 
 

\bigskip

{\large \bf 2. Scalar case}

\bigskip

In this section, we briefly recall the construction of the FSS for the scalar equation
\begin{equation} \label{eqvs}
-y'' + \frac{\om}{x^2} y = \la y,
\end{equation}
where $\omega = \nu^2 - \frac{1}{4}$, $\nu > 0$, $\nu \notin \mathbb N$. We adapt the results from \cite{Yur-depon, Yur92}, where they were obtained for 
the higher-order differential operators.

Let $\la = 1$. Then equation \eqref{eqvs} has a fundamental system of Bessel solutions
\begin{equation} \label{defc0}
c_j(x) = x^{\mu_j} \sum_{k = 0}^{\iy} c_{jk} x^{2k}, \quad j = 1, 2,
\end{equation}
where 
$$
\mu_1 = \frac{1}{2} - \nu, \quad \mu_2 = \frac{1}{2} + \nu,
$$
$$
c_{jk} = (-1)^k c_{j0} \left( \prod_{s = 1}^k ((2s + \mu_j) (2 s + \mu_j - 1) + \omega) \right)^{-1}.
$$
The constants $c_{j0}$ can be chosen arbitrary. Fix them in such a way, that $c_{10} c_{20} = (2 \nu)^{-1}$.
Then 
\begin{equation} \label{wronc}
\langle c_1, c_2 \rangle = 1.
\end{equation}

Denote by $\Pi$ the complex $x$-plane with the cut $x \le 0$. The solutions $c_j(x)$ are analytic in~$\Pi$.

Equation \eqref{eqvs} with $\la = 1$ also has Jost solutions
$$
e_1(x) = e^{ix} - \int_x^{\iy} \sin(x - t) \frac{\omega}{t^2} e_1(t) \, dt, \quad \mbox{Im}\, x > 0,  
$$ 
\begin{equation} \label{defe2}
e_2(x) = e_1(-x), \quad \mbox{Im}\, x < 0.
\end{equation}
Here the integral is taken along the ray $\{ t \colon \arg t = \arg x, |t| \in (|x|, +\iy) \}$. The functions 
$e_1(x)$ and $e_2(x)$ are analytic in the upper and the lower half-plane, respectively. They can be represented 
in the form
\begin{equation} \label{ec0}
e_k(x) = \beta_{k1}^0 c_1(x) + \beta_{k2}^0 c_2(x), \quad k = 1, 2,
\end{equation}
with some constants $\beta_{kj}^0$. Relation \eqref{ec0} gives an analytic continuation of $e_k(x)$ into the whole
$\Pi$. One can show that the following asymptotic relations hold for $\nu = 0, 1$:
$$
e_1^{(\nu)}(x) = i^{\nu} \exp(ix) (1 + O(x^{-1})), \quad e_2^{(\nu)}(x) = (-i)^{\nu} \exp(-ix) (1 + O(x^{-1})), \quad x \in \Pi, \quad |x| \to \iy.
$$
Hence 
\begin{equation} \label{wrone}
\langle e_1, e_2 \rangle = - 2 i.
\end{equation}

It follows from \eqref{defe2} and \eqref{ec0}, that 
\begin{equation} \label{relbeta}
\beta_{2j}^0 = \exp(i \pi \mu_j) \beta_{1j}^0, \quad j = 1, 2.  
\end{equation}
By virtue of \eqref{wronc}, \eqref{wrone} and \eqref{ec0}, $\det [\beta_{kj}^0]_{k, j = 1, 2} = - 2 i$.
Consequently, 
$$
   \beta_{11}^0 \beta_{12}^0 = \frac{1}{4 i \sin \nu}.
$$

Now consider equation \eqref{eqvs} for $\la = \rho^2$, $\mbox{Re}\, \rho \ge 0$, $x > 0$.
Obviously, the functions
\begin{equation} \label{defce}
   c_j(x, \la) = \rho^{-\mu_j} c_j(\rho x), \quad j = 1, 2, \quad e_k(x, \rho) = e_k(\rho x), \quad k = 1, 2,
\end{equation} 
form FSS for it. The solutions $c_j(x, \la)$ are entire in $\la$, while $e_k(x, \rho)$ are analytic
in $\rho$ for $\mbox{Re}\,\rho > 0$, continuous for $\mbox{Re}\,\rho \ge 0$, $|\rho| \ge \rho^*$ and satisfy the estimates
\begin{equation} \label{asympte1}
 	|e_k^{(\nu)}(x, \rho) ((-1)^{k-1} i \rho)^{\nu} \exp((-1)^{k - 1} i \rho x) - 1| \le \frac{M_0}{|\rho|x}, \quad |\rho| x \ge 1, \quad \nu = 0, 1, \quad k = 1, 2.
\end{equation}
with some constant $M_0$.

The relations \eqref{ec0} yield
\begin{equation} \label{ecb}
 	e_k(x, \rho) = \beta_{k1}^0 \rho^{\mu_1} c_1(x, \la) + \beta_{k2}^0 \rho^{\mu_2} c_2(x, \la), \quad k = 1, 2.
\end{equation}
Consequently, in view of \eqref{defc0} and \eqref{defce}, we have
\begin{equation} \label{asympte2}
e_k(x, \rho) = O((\rho x)^{\mu_1}), \quad |\rho|x < 1, \, k = 1, 2.
\end{equation}

\bigskip

{\bf \large 3. Bessel-type and Birkhoff-type solutions for the matrix equation}

\bigskip

The matrix equation \eqref{eqv} with $Q(x) \equiv 0$ splits into $m$ scalar equations, and therefore it
has matrix solutions $C_j(x, \la) = \mbox{diag}\, \{ c_{jq}(x, \la) \}_{q = 1}^m$, 
$E_j(x, \rho) = \mbox{diag}\, \{ e_{jq}(x, \rho) \}_{q = 1}^m$, $\rho = \sqrt \la$, $\mbox{Re}\, \rho \ge 0$, $j = 1, 2$, 
where $c_{jq}(x, \la)$ and $e_{jq}(x, \rho)$
are constructed like $c_j(x, \la)$ and $e_j(x, \rho)$ in Section~2, respectively, with $\nu_q$ instead of $\nu$. 
In particular, $\mu_{1q} = \frac{1}{2} - \nu_q$,
$\mu_{2q} = \frac{1}{2} + \nu_q$, $q = \overline{1, m}$.

Let $S_j(x, \la)$ and $S_j^*(x, \la)$, $j = 1, 2$, be the matrix solutions of the following integral equations
\begin{gather} 
\label{inteqS}
 	S_j(x, \la) = C_j(x, \la) - \int_0^x G(x, t, \la) Q(t) S_j(t, \la) \, dt, \\
\label{inteqS*}
 	S_j^*(x, \la) = C_j(x, \la) - \int_0^x S_j^*(t, \la) Q(t) G(x, t, \la) \, dt.
\end{gather}
where $G(x, t, \la) = C_2(x, \la) C_1(t, \la) - C_1(x, \la) C_2(t, \la)$.

Along with equation \eqref{eqv}, consider the following equation
\begin{equation} \label{eqv*}
 	-Z'' + Z \left( \frac{\omega}{x^2} + Q(x) \right) = \lambda Z, \quad x \in (0, T), 
\end{equation}
where $Z = Z(x)$ is a row vector. 

The matrix-functions $S_j(x, \la)$ and $S_j^*(x, \la)$, $j = 1, 2$, are called {\it Bessel-type solutions}
for equations \eqref{eqv} and \eqref{eqv*}, respectively.
They have the following properties \cite{Bond15}:

\begin{enumerate}
\item The columns of the matrices $S_j(x, \la)$, $j = 1,2$, and the rows of the matrices $S_j^*(x, \la)$, $j = 1, 2$,
form FSS for equations \eqref{eqv} and \eqref{eqv*}.
\item For each fixed $x \in (0, T)$, the matrix functions 
$S_j^{(\nu)}(x, \lambda)$ � $S_j^{*(\nu)}(x, \lambda)$ are entire in the $\la$-plane.
\item The following asymptotic formulas are valid as $x \to 0$, $j = 1, 2$, $q = \overline{1, m}$:
\begin{equation} \label{asymptS1}
 	S_j(x, \lambda) e_q = O(x^{\mu_{jq}}), \quad e^*_q S_j^*(x, \lambda) = O(x^{\mu_{jq}}), 
\end{equation}
\begin{equation} \label{asymptS2}
 	x^{-\mu_{jq}} (S_j(x, \la) - C_j(x, \la)) e_q = o(x^{2\nu_1}), \quad
 	x^{-\mu_{jq}} e_q^* (S_j^*(x, \la) - C_j(x, \la)) = o(x^{2 \nu_1}),
\end{equation}
\item The following relations hold
\begin{equation} \label{wronS}
 	\langle S^*_j(x, \la), S_k(x, \la) \rangle = (-1)^{j-1} \de_{jk} I, \quad j, k = 1, 2,
\end{equation}
where $\de_{jk}$ is the Kronecker delta.
\end{enumerate}

In the next theorem, we construct the so-called {\it Birkhoff-type solutions} with the prescribed behavior as $|\rho| \to \iy$.

\begin{thm}\label{thm:Y}
Let $\Omega_{0} = \{ \rho \colon \arg \rho \in (0, \pi/2), \, |\rho| > \rho^* \}$, 
$\Omega_{-1} = \{ \rho \colon \arg \rho \in (-\pi/2, 0), \, |\rho| > \rho^* \}$. 
In each fixed sector $\Omega_{k_0}$ ($k_0 \in \{0, 1 \}$) there exist matrix functions $Y_k(x, \rho)$, $k = 1,2$, 
whose columns form FSS for equation \eqref{eqv} and satisfy the following conditions:

($i_1$) For each fixed $x \in (0, T)$, the matrix functions $Y_k^{(\nu)}(x, \rho)$ are analytic in $\Omega_{k_0}$
and continuous in $\overline{ \Omega_{k_0}}$ for sufficiently large $\rho^*$.

($i_2$) For $x \in (0, T]$, $\rho \in \overline{\Omega_{k_0}}$, $|\rho x| \ge 1$, $\nu = 0, 1$, the following asymptotic 
formulas are valid:
\begin{equation} \label{asymptY}
 Y_1^{(\nu)}(x, \rho) = (i \rho)^{\nu} \exp(i \rho x) [I]_{\beta}, \quad
 Y_2^{(\nu)}(x, \rho) = (- i \rho)^{\nu} \exp(- i \rho x) [I]_{\beta}, \quad |\rho| \to \iy
\end{equation}
where $[I]_{\beta} = \left( I + O(\rho^{-\beta}) \right)$, $\beta = \min \{ 1, 2 \nu_1 \}$.
\end{thm}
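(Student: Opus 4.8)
The plan is to construct $Y_k(x,\rho)$ as solutions of integral equations built on the scalar Birkhoff-type solutions $E_k(x,\rho)$ from Section~3, exactly mirroring the construction of the Bessel-type solutions $S_j$ in \eqref{inteqS}, but with the diagonal matrix $E_k$ (and the associated Green-type kernel) in place of $C_j$. Concretely, let $H(x,t,\rho) := \bigl(E_2(x,\rho)E_1(t,\rho) - E_1(x,\rho)E_2(t,\rho)\bigr)$ be the matrix analogue of the scalar resolvent kernel; by \eqref{wrone} (componentwise) this is, up to the constant $-2i$, the Green's function for the unperturbed diagonal operator. I would then define $Y_k$ by
\begin{equation}\label{pp:inteqY}
Y_k(x,\rho) = E_k(x,\rho) + \frac{1}{2i}\int_x^T H(x,t,\rho)\, Q(t)\, Y_k(t,\rho)\, dt,
\end{equation}
integrating from the \emph{right} endpoint $T$ so that the exponential weights $\exp(\pm i\rho x)$ are controlled (decaying in the relevant half-plane / sector). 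The columns of $Y_k$ then automatically solve \eqref{eqv}, and since the kernel and $E_k$ are analytic in $\rho\in\Omega_{k_0}$ and continuous up to the boundary, property $(i_1)$ follows once the series converges.

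The core step is the convergence of the iteration for \eqref{pp:inteqY} and the extraction of the asymptotics \eqref{asymptY}. I would pass to the normalized unknown $\widetilde Y_1(x,\rho) := (i\rho)^{-\nu}\exp(-i\rho x)\,Y_1^{(\nu)}$ — more precisely work with $\widetilde Y_1(x,\rho) := \exp(-i\rho x) Y_1(x,\rho)$ and its derivative — so that the kernel of the transformed integral operator has entries bounded by $C\exp(-2\tau(t-x))$ on $\{t\ge x\}$ with $\tau=\mathrm{Im}\,\rho\ge 0$ in $\Omega_0$ (and the symmetric statement in $\Omega_{-1}$). Using the scalar estimates \eqref{asympte1} for $|\rho x|\ge 1$ and \eqref{asympte2} for $|\rho x|<1$, together with the integrability hypothesis on $x^{1-2\nu_1}Q(x)$, one shows the transformed operator has norm $O(\rho^{-\beta})$ on the space of bounded continuous matrix functions on $[x_0,T]$ for $|\rho|$ large; this is where the exponent $\beta=\min\{1,2\nu_1\}$ enters — the contribution near $t=0$ is estimated via \eqref{asympte2} and the factor $t^{-\mu_{1q}}$ absorbed into $Q$, producing the $\rho^{-2\nu_1}$ term, while the region $|\rho t|\ge 1$ gives the $\rho^{-1}$ term from \eqref{asympte1}. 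Hence the Neumann series converges, $\|Y_k - E_k\| = O(\rho^{-\beta})$ uniformly, and differentiating \eqref{pp:inteqY} once (the kernel $H$ is smooth in $x$, and $\partial_x H(x,t,\rho)$ has the same exponential bounds) gives the $\nu=1$ asymptotics; combining with the scalar asymptotics $E_k^{(\nu)}(x,\rho) = (\pm i\rho)^\nu\exp(\pm i\rho x)[I]_{\cdot}$ yields \eqref{asymptY}. That the columns of $Y_1$ and $Y_2$ together form an FSS follows from the Wronskian: $\langle \widehat Y_2, Y_1\rangle$ (with $\widehat Y_2$ the corresponding row-solution) is constant in $x$ and, evaluated asymptotically via \eqref{asymptY}, equals $2i\rho\,[I]_\beta$, hence is invertible for large $\rho^*$.

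The main obstacle I anticipate is the non-uniformity of the kernel estimates across the two regimes $|\rho x|\ge 1$ and $|\rho x|<1$: near the singular point the solutions $E_k$ are not close to pure exponentials but behave like $(\rho x)^{\mu_{1q}}$, so the naive exponential bound on $H$ fails there and must be replaced by the power-type bound \eqref{asympte2}, and one must check that the transition at $|\rho x|\sim 1$ does not destroy the overall $O(\rho^{-\beta})$ bound on the integral operator. Carefully splitting the $t$-integral at $t = |\rho|^{-1}$ and tracking how the weights $t^{\mu_{1q}}$, $t^{-2\nu_1}$ from $Q$, and $\exp(-2\tau(t-x))$ interact is the delicate part; everything else is a routine successive-approximation argument and bookkeeping of the analyticity and continuity in $\rho$ inherited from $E_k$ and $H$.
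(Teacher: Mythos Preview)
Your overall strategy---Volterra/Fredholm integral equations built on the diagonal Birkhoff solutions $E_k$, two-regime normalization (power-type for $|\rho x|<1$, exponential for $|\rho x|\ge 1$), splitting the $t$-integral at $|\rho|^{-1}$, and successive approximations giving the $O(\rho^{-\beta})$ remainder---matches the paper's approach closely, and your diagnosis of where $\beta=\min\{1,2\nu_1\}$ comes from is correct. However, there is a genuine gap: the \emph{same} integral equation \eqref{pp:inteqY}, integrating the full kernel $H(x,t,\rho)=E_2(x,\rho)E_1(t,\rho)-E_1(x,\rho)E_2(t,\rho)$ over $t\in(x,T)$, cannot work for both $k=1$ and $k=2$ in a fixed sector. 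In $\Omega_0$ (where $\tau=\mathrm{Im}\,\rho>0$) with the normalization $\widetilde Y_2=\exp(i\rho x)Y_2$, the term $E_1(x,\rho)E_2(t,\rho)$ in the kernel contributes a factor of size $\exp\bigl(2\tau(t-x)\bigr)$ for $t>x$, which is exponentially large on $(x,T)$; the transformed operator is not a contraction, and your claimed bound ``entries bounded by $C\exp(-2\tau(t-x))$'' is simply false for this piece. (Symmetrically, in $\Omega_{-1}$ the equation fails for $Y_1$.)

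The paper avoids this by using \emph{different} integral equations for the two solutions in each sector: for $Y_1$ in $\Omega_0$ it takes the mixed equation
\[
Y_1(x,\rho)=E_1(x,\rho)+\frac{1}{2i\rho}\int_0^x E_1(x,\rho)E_2(t,\rho)Q(t)Y_1(t,\rho)\,dt+\frac{1}{2i\rho}\int_x^T E_2(x,\rho)E_1(t,\rho)Q(t)Y_1(t,\rho)\,dt,
\]
and for $Y_2$ the pure Volterra equation
\[
Y_2(x,\rho)=E_2(x,\rho)+\frac{1}{2i\rho}\int_0^x\bigl(E_1(x,\rho)E_2(t,\rho)-E_2(x,\rho)E_1(t,\rho)\bigr)Q(t)Y_2(t,\rho)\,dt,
\]
so that in each case only the exponentially bounded combination appears on each subinterval. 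Once you make this correction (and note the missing factor $\rho^{-1}$ in your kernel normalization), the rest of your outline---including the two-regime normalization $u_k=Y_k/(\rho x)^{\mu_{11}}$ for $|\rho x|<1$ and $u_k=Y_k/\exp(\pm i\rho x)$ for $|\rho x|\ge 1$, which you anticipated in your ``main obstacle'' paragraph but did not actually implement---goes through essentially as in the paper.
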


\begin{proof}
For definiteness, consider the sector $\Omega_0$. The arguments for $\Omega_{-1}$ are similar.
Consider the integral equations
\begin{multline} \label{intY1}
Y_1(x, \rho) = E_1(x, \rho) + \frac{1}{2 i \rho} \int_0^x E_1(x, \rho) E_2(t, \rho) Q(t) Y_1(t, \rho) \, dt
\\ + \frac{1}{2 i \rho} \int_x^T E_2(x, \rho) E_1(t, \rho) Q(t) Y_1(t, \rho) \, dt, 
\end{multline}
\begin{equation} \label{intY2}
   Y_2(x, \rho) = E_2(x, \rho) + \frac{1}{2 i \rho} \int_0^x (E_1(x, \rho) E_2(t, \rho) - E_2(x, \rho) E_1(t, \rho)) Q(t) Y_2(t, \rho) \, dt.
\end{equation} 
Obviously, their solutions $Y_k(x, \rho)$, $k = 1, 2$, formally satisfy \eqref{eqv}. Denote for $k = 1, 2$
\begin{equation} \label{uk1}
    u_k(x, \rho) = \frac{Y_k(x, \rho)}{(\rho x)^{\mu_{11}}}, \quad u_k^0(x, \rho) = \frac{E_k(x, \rho)}{(\rho x)^{\mu_{11}}} \quad |\rho x| < 1, 
\end{equation}
\begin{equation*} 
    u_k(x, \rho) = \frac{Y_k(x, \rho)}{\exp((-1)^{k-1} i \rho x)}, \quad  u_k^0(x, \rho) = \frac{E_k(x, \rho)}{\exp((-1)^{k-1} i \rho x)}, \quad |\rho x| \ge 1.
\end{equation*}
Note that by virtue of \eqref{asympte1} and \eqref{asympte2}, $ u_k^0(x, \rho) = O(1)$, $x \in (0, T]$, $\rho \in \Omega_0$.

The equations \eqref{intY1} and \eqref{intY2} can be rewritten in the form
\begin{equation} \label{intu}
   u_k(x, \rho) = u_k^0(x, \rho) + \int_0^T \mathscr{K}_k(x, t, \rho) u_k(t, \rho) \, dt, \quad k = 1, 2.
\end{equation}
The kernel $\mathscr{K}_1(x, t, \rho)$ takes the form
$$
	|\rho x| < 1 \colon \quad
   \mathscr{K}_1(x, t, \rho) = \begin{cases}
      \frac{1}{2 i \rho} u_1^0(x, \rho) u_2^0(t, \rho) (\rho t)^{2 \mu_{11}} Q(t), \quad 0 < t < x, \\
      \frac{1}{2 i \rho} u_2^0(x, \rho) u_1^0(t, \rho) (\rho t)^{2 \mu_{11}} Q(t), \quad x < t < \frac{1}{|\rho|}, \\
      \frac{1}{2 i \rho} u_2^0(x, \rho) u_1^0(t, \rho) \exp(2 i \rho t) Q(t), \quad \frac{1}{|\rho|} < t < T,
   \end{cases}
$$
$$
	|\rho x| \ge 1 \colon \quad
   \mathscr{K}_1(x, t, \rho) = \begin{cases}
      \frac{1}{2 i \rho} u_1^0(x, \rho) u_2^0(t, \rho) (\rho t)^{2 \mu_{11}} Q(t), \quad 0 < t < \frac{1}{|\rho|}, \\
      \frac{1}{2 i \rho} u_2^0(x, \rho) u_1^0(t, \rho) \exp(2 i \rho t) Q(t), \quad \frac{1}{|\rho|} < t < x, \\
      \frac{1}{2 i \rho} u_2^0(x, \rho) u_1^0(t, \rho) \exp(2 i \rho t) Q(t), \quad x < t < T.
   \end{cases}
$$
Similar representations can be written for $\mathscr{K}_2(x, t, \rho)$.
Clearly, 
\begin{equation} \label{estK}
 	\int_0^T \| \mathscr{K}_k(x, t, \rho) \| \, dt \le \frac{C}{|\rho|^{-\beta}}, \quad x \in (0, T], \quad \rho \in \overline{\Omega_0}, \quad k = 1,2.
\end{equation}
Therefore the solutions of \eqref{intu} can be found by the method of successive approximations for $|\rho| > \rho^*$, 
if $\rho^*$ is sufficiently large. The solutions $u_k(x, \rho)$ are analytic in $\Omega_0$,
continuous in $\overline{ \Omega_0}$ and satisfy the estimates
$$
  	\| u_k(x, \rho) \| \le C, \quad \| u_k(x, \rho) - u_k^0(x, \rho) \| \le \frac{C}{|\rho|^{-\beta}}, \quad x \in (0, T], \, \rho \in \overline{\Omega_0}.
$$
Consequently, the conditions ($i_1$) and ($i_2$) are fulfilled for the solutions $Y_k(x, \rho)$, $k = 1, 2$, of \eqref{intY1}, \eqref{intY2}.
\end{proof}

\bigskip

{\large \bf 4. Stokes multipliers}

\bigskip

In this section, we investigate the connection between the Bessel-type and the Birkhoff-type solutions.
Clearly, there exist matrix coefficients $B_{kj}(\rho)$, called the {\it Stokes multipliers}, such that
\begin{equation} \label{YSB}
Y_k(x, \rho) = S_1(x, \la) B_{k1}(\rho) + S_2(x, \la) B_{k2}(\rho), \quad k = 1, 2.
\end{equation}
For the case $Q(x) \equiv 0$, we have
\begin{equation} \label{YSB0}
E_k(x, \rho) = C_1(x, \la) B_{k1}^0(\rho) + C_2(x, \la) B_{k2}^0(\rho)
\end{equation}
Recall that the matrices $C_j(x, \la)$ and $E_k(x, \rho)$, $j, k = 1, 2$, are diagonal. In view of \eqref{ecb},
$$
  	B_{kj}^0(\rho) = D_j(\rho) B_{kj}^0, \quad D_j(\rho) = \mbox{diag}\{ \rho^{\mu_{jq}} \}_{q = 1}^m, 
  	\quad B_{kj}^0 = \mbox{diag}\{ \beta_{kjq}^0 \}_{q = 1}^m.
$$
The following theorem presents asymptotic formulas for the Stokes multipliers $B_{kj}(\rho)$.
These asymptotics play a crucial role in the analysis of direct and inverse problems for equation \eqref{eqv}.
Further we use these relations in order to derive asymptotic formulas for the solutions $S_j(x, \la)$ as $|\la| \to \iy$, 
and for the spectral characteristics of equation \eqref{eqv}.

\begin{thm} \label{thm:Stokes}
The following relations hold
\begin{equation} \label{asymptB}
  	B_{kj}(\rho) = D_j(\rho) B_{kj}^0 [I]_{\beta}, \quad \mbox{Re}\,\rho \ge 0, \quad |\rho| \to \iy, \quad k, j = 1, 2.
\end{equation}
\end{thm}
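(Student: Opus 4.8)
The plan is to compare the integral equations \eqref{intY1}--\eqref{intY2} defining $Y_k$ with the corresponding ``unperturbed'' representations \eqref{YSB0}--\eqref{ecb} for $E_k$, and to extract the Stokes multipliers by evaluating both sides of \eqref{YSB} near the singular point $x \to 0$. Since the Bessel-type solutions satisfy the refined asymptotics \eqref{asymptS1}--\eqref{asymptS2}, the leading term of $S_j(x,\la)e_q$ as $x \to 0$ is exactly $C_j(x,\la)e_q = c_{jq}(x,\la)e_q$, and $\mu_{1q} < \mu_{2q}$ for every $q$. Hence, picking out the coefficient of $x^{\mu_{jq}}$ on the left-hand side of $Y_k(x,\rho)e_q = S_1(x,\la)B_{k1}(\rho)e_q + S_2(x,\la)B_{k2}(\rho)e_q$ (after multiplying by $x^{-\mu_{jq}}$ and letting $x\to 0$, using \eqref{asymptS2} to kill the remainder) should recover $B_{kj}(\rho)$ up to the normalizing diagonal factor coming from $c_{j0}$. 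The same device applied to the $Q\equiv 0$ case yields $B_{kj}^0(\rho)=D_j(\rho)B_{kj}^0$, so the task reduces to controlling the difference $Y_k(x,\rho) - E_k(x,\rho)$ near $x=0$.

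First I would record, from the successive-approximation bound in the proof of Theorem~\ref{thm:Y}, that in the region $|\rho x| < 1$ one has $u_k(x,\rho) - u_k^0(x,\rho) = O(|\rho|^{-\beta})$ uniformly, i.e.
\begin{equation*}
Y_k(x,\rho)e_q = E_k(x,\rho)e_q + (\rho x)^{\mu_{1q}} O(\rho^{-\beta}), \qquad |\rho x| < 1,
\end{equation*}
so that in fact $x^{-\mu_{1q}}\bigl(Y_k(x,\rho) - E_k(x,\rho)\bigr)e_q = O(\rho^{-\beta})$ as $x \to 0$ (the exponent $\mu_{1q}$ being the smallest available power). Next I would use \eqref{ecb}, i.e. $E_k(x,\rho)e_q = \beta_{k1q}^0\rho^{\mu_{1q}}c_{1q}(x,\la)e_q + \beta_{k2q}^0\rho^{\mu_{2q}}c_{2q}(x,\la)e_q$, together with the expansion \eqref{defc0} of the $c_{jq}$. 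Substituting everything into \eqref{YSB}, multiplying by $x^{-\mu_{jq}}$, and letting $x\to 0$: the contributions of $S_j$ collapse to the leading Bessel coefficient by \eqref{asymptS2}, the contribution of $E_k$ is explicit, and the error term is $O(\rho^{-\beta})$. Comparing the $j=1$ and $j=2$ coefficients (which decouple because $\mu_{1q}\neq\mu_{2q}$ and neither equals the other for the relevant columns) gives
\begin{equation*}
B_{kj}(\rho) = D_j(\rho) B_{kj}^0 + D_j(\rho) B_{kj}^0 \cdot O(\rho^{-\beta}) = D_j(\rho) B_{kj}^0 [I]_\beta,
\end{equation*}
which is \eqref{asymptB}.

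The main obstacle I anticipate is making the passage $x \to 0$ in \eqref{YSB} fully rigorous when \emph{several distinct exponents} $\mu_{jq}$ are present simultaneously in a single matrix identity: one must verify that the column-by-column extraction is legitimate, that the remainder $o(x^{2\nu_1})$ from \eqref{asymptS2} genuinely dominates the gap between consecutive exponents $\mu_{1q}$ and $\mu_{2q}$ (here the hypothesis $\nu_q \notin \mathbb{N}$ and the ordering $\nu_1 \ge \cdots \ge \nu_m$ matter, since $2\nu_1 \ge 2\nu_q = \mu_{2q}-\mu_{1q}$), and that the $O(\rho^{-\beta})$ bound on $u_k - u_k^0$ transfers to the $x^{-\mu_{1q}}$-normalized difference uniformly down to $x=0$. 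A secondary point is that $Y_k$ is only guaranteed on the sectors $\Omega_{0}$ and $\Omega_{-1}$, whereas \eqref{asymptB} is claimed on all of $\mathrm{Re}\,\rho \ge 0$; this is handled by noting the two sectors cover $\mathrm{Re}\,\rho \ge 0$ up to the imaginary axis and that the Stokes multipliers, being defined through the entire-in-$\la$ solutions $S_j$ and the sectorially analytic $Y_k$, extend continuously across the boundary rays by the standard Phragmén--Lindelöf / continuity argument already used for \eqref{asympte1}.
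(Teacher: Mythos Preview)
Your overall strategy---extract $B_{kj}(\rho)$ by comparing $Y_k$ and $E_k$ near $x=0$ and reading off the Bessel coefficients---is exactly the paper's strategy. But the execution has a real gap at the point you yourself flag as the ``main obstacle''.

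The crude bound $u_k - u_k^0 = O(\rho^{-\beta})$ gives only
\[
Y_k(x,\rho) - E_k(x,\rho) = (\rho x)^{\mu_{11}}\,O(\rho^{-\beta}),\qquad |\rho x|<1,
\]
with the \emph{fixed} exponent $\mu_{11}=\tfrac12-\nu_1$, not $\mu_{1q}$ as you write. Since $\mu_{11}\le\mu_{1q}$ for every $q$, your claimed relation $x^{-\mu_{1q}}(Y_k-E_k)e_q=O(\rho^{-\beta})$ as $x\to0$ is false in general for $q\ne 1$: the vector $x^{-\mu_{1q}}(Y_k-E_k)e_q$ has components blowing up like $x^{\mu_{1p}-\mu_{1q}}$ for $p<q$. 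The parenthetical ``the exponent $\mu_{1q}$ being the smallest available power'' is incorrect; $\mu_{11}$ is the smallest. Consequently, from your displayed equation you cannot identify the $q$-th row of $B_{k1}(\rho)$ for $q\ne 1$, and the passage to $j=2$ (``the $j=1$ and $j=2$ coefficients decouple'') breaks down for the same reason: the error term is too coarse to survive the renormalization by $x^{-\mu_{2q}}$.

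The paper repairs this by working \emph{row}-wise rather than column-wise and by going back into the integral equation. Because the $x$-dependence of the kernel in \eqref{intY1}--\eqref{intY2} sits in the diagonal factor $E_j(x,\rho)$ on the \emph{left}, multiplying by $e_q^*$ on the left isolates the single scalar $e_{jq}(x,\rho)$, whose leading power is exactly $\mu_{1q}$. This is the content of the auxiliary Lemma~\ref{lem:F}: one shows directly from \eqref{intu} that $F_{1q}(x,\rho)=e_q^*(E_1-Y_1)(\rho x)^{-\mu_{1q}}$ is continuous at $x=0$ with $F_{1q}(0,\rho)=O(\rho^{-\beta})$, which yields \eqref{asymptB} for $j=1$. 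For $j=2$ one must first subtract off the already-determined $\hat c_{1q}$-contribution and only then renormalize by $(\rho x)^{\mu_{1q}-\mu_{2q}}$; the resulting $F_{2q}$ is again $O(\rho^{-\beta})$ at $x=0$, but proving this requires a genuine cancellation inside the integral (the $\hat c_{1q}$-terms in the two pieces combine via \eqref{ecb} to produce a $\hat c_{2q}$-factor), not merely the bound $u_k-u_k^0=O(\rho^{-\beta})$. Your outline is missing both the row-wise normalization and this two-step subtraction; once those are in place the rest of your argument goes through.
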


In order to prove Theorem~\ref{thm:Stokes}, we need the following auxiliary lemma.

\begin{lem} \label{lem:F}
The vector functions
\begin{equation} \label{defF1}
 	F_{1q}(x, \rho) = e_q^* (E_1(x, \rho) - Y_1(x, \rho)) (\rho x)^{-\mu_{1q}}, 
\end{equation}
\begin{equation} \label{defF2}
 	F_{2q}(x, \rho) = (F_{1q}(x, \rho) - c_{10q}^{-1} \hat c_{1q}(x, \la) F_{1q}(0, \rho)) (\rho x)^{\mu_{1q} - \mu_{2q}}, \quad q = \overline{1, m},
\end{equation}
are continuous at $x = 0$ and
\begin{equation} \label{asymptF}
 	F_{kq}(0, \rho) = O(\rho^{-\beta}), \quad \rho \in \overline{\Omega_0}, \quad |\rho| \to \iy, \quad k = 1, 2, \, q = \overline{1, m}.
\end{equation}
Here $\hat c_{jq}(x, \la) := x^{-\mu_{1q}} c_{jq}(x, \la)$, and $c_{10q}$ is the coefficient in the series \eqref{defc0} for $c_{1q}(x)$.
\end{lem}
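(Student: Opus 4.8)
The plan is to analyze $F_{1q}$ and $F_{2q}$ by converting the defining expressions into integral equations using the results already established. First I would record that both $E_1(x,\rho)$ and $Y_1(x,\rho)$ satisfy the Birkhoff-type integral equation \eqref{intY1} (with $Q\equiv 0$ for $E_1$, so $E_1$ is literally the zeroth iterate), hence their difference $E_1(x,\rho)-Y_1(x,\rho)$ equals the integral terms in \eqref{intY1} applied to $Y_1$. Multiplying on the left by $e_q^*$ and scaling by $(\rho x)^{-\mu_{1q}}$ turns this into an expression for $F_{1q}(x,\rho)$ in terms of integrals of $Q(t)Y_1(t,\rho)$. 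Using the asymptotic/bounds \eqref{asympte1}, \eqref{asympte2} for $E_1,E_2$ together with the already-proven bound $\|Y_1(x,\rho)\|\le C|\rho x|^{\mu_{11}}$ near $0$ (implicit in conditions $(i_1),(i_2)$ and the proof of Theorem~\ref{thm:Y}), and recalling the hypothesis that $x^{1-2\nu_1}Q(x)$ is integrable, I would check that the scaled integrals converge as $x\to 0$, which gives continuity of $F_{1q}$ at $x=0$ and a value $F_{1q}(0,\rho)$ expressed as a single convergent integral over $(0,T)$. The factor $\frac{1}{2i\rho}$ in front, combined with an estimate like \eqref{estK} showing the remaining integral is $O(|\rho|^{\beta})$ at worst (or $O(1)$), should yield $F_{1q}(0,\rho)=O(\rho^{-\beta})$; here the split of the $t$-integral at $t=1/|\rho|$ as in the kernel formulas for $\mathscr{K}_1$ is what produces the $\rho^{-\beta}$ rather than $\rho^{-1}$.

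For $F_{2q}$ the idea is that the subtraction of $c_{10q}^{-1}\hat c_{1q}(x,\la)F_{1q}(0,\rho)$ removes the leading singular term of $F_{1q}$ at $x=0$. Concretely, $F_{1q}(x,\rho)$ behaves like $F_{1q}(0,\rho)\cdot\hat c_{1q}(x,\la)/c_{10q}$ plus a correction of higher order in $x$ — this is because the $Y_1$-equation's kernel, evaluated against the leading Bessel behavior, reproduces a multiple of $c_{1q}$; more honestly, I expect one shows $F_{1q}(x,\rho)-c_{10q}^{-1}\hat c_{1q}(x,\la)F_{1q}(0,\rho)=O(x^{\min\{2,2\nu_q\}}\cdot(\text{something}))$ using the $o(x^{2\nu_1})$-type refinement in \eqref{asymptS2} and the explicit series \eqref{defc0} for $c_{1q}$ (note $\hat c_{1q}(x,\la)=1+O(x^2)$). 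Multiplying by $(\rho x)^{\mu_{1q}-\mu_{2q}}=(\rho x)^{-2\nu_q}$ and letting $x\to 0$, the $x$-powers should cancel (or leave a nonnegative power), giving continuity at $x=0$; the resulting $F_{2q}(0,\rho)$ is again a $\frac{1}{2i\rho}$-multiple of a convergent integral that I estimate, with the $t$-split at $1/|\rho|$, to be $O(\rho^{-\beta})$.

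The key steps in order: (1) subtract the integral equations for $E_1$ and $Y_1$ to get $F_{1q}(x,\rho)$ as a sum of two scaled integral terms; (2) verify convergence of these integrals as $x\to0^+$ using $\int_0^T x^{1-2\nu_1}\|Q(x)\|\,dx<\infty$ plus the near-zero bounds on $E_k$ and $Y_1$, establishing continuity and the formula for $F_{1q}(0,\rho)$; (3) estimate $F_{1q}(0,\rho)=O(\rho^{-\beta})$ by splitting each $t$-integral at $t=1/|\rho|$, controlling the inner part by $|\rho|^{2\nu_1-1}$-type powers and the outer part by the exponential decay factor $\exp(2i\rho t)$, $\mathrm{Im}\,\rho\ge 0$; (4) extract the leading $x$-asymptotics of $F_{1q}(x,\rho)$ near $0$ to show $F_{1q}(x,\rho)-c_{10q}^{-1}\hat c_{1q}(x,\la)F_{1q}(0,\rho)$ vanishes faster by a factor $x^{2\nu_q}$ (up to logarithmic-free powers), so that multiplying by $(\rho x)^{-2\nu_q}$ keeps things bounded as $x\to 0$; (5) repeat the splitting estimate of step (3) for the resulting integral to get $F_{2q}(0,\rho)=O(\rho^{-\beta})$.

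The main obstacle I expect is step (4): showing that the second term $F_{2q}$ is well-defined and bounded at $x=0$ requires a precise accounting of the $x\to0$ expansion of $F_{1q}$, matching it against the Bessel series for $c_{1q}$ term by term, and confirming that the residual after removing the $\hat c_{1q}$-multiple of $F_{1q}(0,\rho)$ genuinely gains the full factor $x^{\mu_{2q}-\mu_{1q}}=x^{2\nu_q}$ uniformly in $\rho\in\overline\Omega_0$ — not just pointwise. This is delicate because the integral kernels in \eqref{intY1} mix the $c_{1q}$- and $c_{2q}$-type behaviors, and one must argue that the $c_{1q}$-part of the kernel's action, after the subtraction, cancels, leaving only $c_{2q}$-order contributions; handling the regime $|\rho x|$ near $1$ where the kernel changes form adds bookkeeping. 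The $\rho^{-\beta}$ rate itself (rather than $\rho^{-1}$) is a secondary subtlety forced by the non-smoothness of $Q$ near the singularity and is handled by the standard $t=1/|\rho|$ split, exactly as in \eqref{estK}.
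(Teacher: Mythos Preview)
Your outline is essentially the paper's approach: express $F_{1q}$ via the integral term in \eqref{intY1} (equivalently \eqref{intu}), use the $t$-split at $1/|\rho|$ for the $O(\rho^{-\beta})$ bound, and then for $F_{2q}$ show that the subtraction kills the $\hat c_{1q}$-part of the kernel's $x\to 0$ expansion, leaving only $\hat c_{2q}$-order pieces.

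Two corrections to step~(4). First, drop the appeal to \eqref{asymptS2}: that estimate concerns $S_j-C_j$, not $E_1-Y_1$, and plays no role here. What actually drives the cancellation is the Bessel decomposition \eqref{ecb}: for $|\rho x|<1$ one has
\[
e_q^*(\rho x)^{\mu_{11}-\mu_{1q}}u_k^0(x,\rho)=\bigl[\beta_{k1q}^0\,\hat c_{1q}(x,\la)+(\rho x)^{\mu_{2q}-\mu_{1q}}\beta_{k2q}^0\,\hat c_{2q}(x,\la)\bigr]e_q^*,
\]
so the $\hat c_{2q}$-terms already carry the needed factor $(\rho x)^{\mu_{2q}-\mu_{1q}}$, and the $\hat c_{1q}$-terms in the integrals over $(x,1/|\rho|)$ and $(1/|\rho|,T)$ cancel exactly against $c_{10q}^{-1}\hat c_{1q}(x,\la)F_{1q}(0,\rho)$. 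Second, the piece you flag as delicate --- the $\hat c_{1q}$-contribution from the integral over $(0,x)$ --- does \emph{not} cancel by subtraction; instead one uses the algebraic identity (from \eqref{ecb})
\[
\beta_{11q}^0 e_{2q}(t,\rho)-\beta_{21q}^0 e_{1q}(t,\rho)=(\beta_{11q}^0\beta_{22q}^0-\beta_{21q}^0\beta_{12q}^0)\,(\rho t)^{\mu_{2q}}\hat c_{2q}(t,\la),
\]
which shows that this term is already of $c_{2q}$-type in $t$, and then the elementary bound $|(\rho x)^{\mu_{1q}-\mu_{2q}}(\rho t)^{\mu_{2q}-\mu_{11}}|\le 1$ for $t<x$, $|\rho x|<1$ finishes the estimate. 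With these two points in hand your plan goes through exactly as in the paper.
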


\begin{proof}
Fix $q = \overline{1, m}$.
It follows from \eqref{intu} and \eqref{defF1}, that
\begin{equation} \label{smF3}
 	- F_{1q}(x, \rho) = e_q^* (\rho x)^{\mu_{11} - \mu_{1q}} \int_0^T \mathscr{K}_1(x, t, \rho) u_1(t, \rho) \, dt,
\end{equation}
where $\mathscr{K}_1$ and $u_1$ were defined in the proof of Theorem~\ref{thm:Y}. 
We are interested in the case $|\rho x| < 1$, $x \to 0$, $\rho \in \overline{\Omega_0}$.
The relations \eqref{uk1} and \eqref{ecb}
imply
\begin{equation} \label{smF}
  	e_q^* (\rho x)^{\mu_{11} - \mu_{1q}} u_k^0(x, \rho) = (\rho x)^{-\mu_{1q}} e_{kq}(x, \rho) e_q^* =
  	\bigl[ \beta_{k1q}^0 \hat c_{1q}(x, \la) + \beta_{k2q}^0 (\rho x)^{\mu_{2q} - \mu_{1q}} \hat c_{2q}(x, \la) \bigr] e_q^*, 
\end{equation}
for $k = 1, 2$. 
This expression tends to $\beta_{k1q}^0 c_{10q}$, as $x \to 0$.
Consequently, the kernel $e_q^* (\rho x)^{\mu_{11} - \mu_{1q}} \mathscr{K}_1(x, t, \rho)$
is continuous at $x = 0$ and satisfies the estimate, similar to \eqref{estK}, uniformly by $x$ in the neighborhood of $0$. 
Hence \eqref{asymptF} is valid for $F_{1q}$.

Let us prove \eqref{asymptF} for $F_{2q}$. Substitute the representation for $\mathscr{K}_1(x, t, \rho)$ and \eqref{smF}
into \eqref{smF3}:
\begin{multline*}
  - F_{1q}(x, \rho) = \frac{1}{2 i \rho} \int_0^x \bigl[ \beta_{11q}^0 \hat c_{1q}(x, \la) + (\rho x)^{\mu_{2q} - \mu_{1q}}
  \beta_{12q}^0 \hat c_{2q}(x, \la) \bigr] (\rho t)^{2 \mu_{11}} e_q^* u_2^0(t, \rho) Q(t) u_1(t, \rho) \, dt \\
  + \frac{1}{2 i \rho} \int_x^{\frac{1}{|\rho|}} \bigl[ \beta_{21q}^0 \hat c_{1q}(x, \la) + (\rho x)^{\mu_{2q} - \mu_{1q}}
  \beta_{22q}^0 \hat c_{2q}(x, \la) \bigr] (\rho t)^{2 \mu_{11} } e_q^* u_1^0(t, \rho) Q(t) u_1(t, \rho) \, dt \\
  + \frac{1}{2 i \rho} \int_{\frac{1}{|\rho|}}^T \bigl[ \beta_{21q}^0 \hat c_{1q}(x, \la) + (\rho x)^{\mu_{2q} - \mu_{1q}}
  \beta_{22q}^0 \hat c_{2q}(x, \la) \bigr] \exp(2 i \rho t) e_q^* u_1^0(t, \rho) Q(t) u_1(t, \rho) \, dt, \quad |\rho x| < 1,
\end{multline*}
\begin{multline*}
- c_{10q}^{-1} \hat c_{1q}(x, \la) F_{1q}(0, \rho) = \frac{1}{2 i \rho} \int_0^{\frac{1}{|\rho|}} \beta_{21q}^0 \hat c_{1q}(x, \la) (\rho t)^{2 \mu_{11}} 
e_q^* u_1^0(t, \rho) Q(t) u_1(t, \rho) \, dt \\
+ \frac{1}{2 i \rho} \int_{\frac{1}{|\rho|}}^T \beta_{21q}^0 \hat c_{1q}(x, \la) \exp(2 i \rho t) e_q^* u_1^0(t, \rho) Q(t) u_1(t, \rho) \, dt.   
\end{multline*}
Now substitute these two relations into \eqref{defF2}. Clearly, the terms with $\hat c_{2q}(x, \la)$ after multiplication
by $(\rho x)^{\mu_{1q} - \mu_{2q}}$ are continuous at $x = 0$ and fulfill the estimate $O(\rho^{-\beta})$ uniformly with respect to $x$. The terms with $\hat c_{1q}(x, \la)$ 
vanish in the integrals by $(x, \frac{1}{|\rho|})$ and $(\frac{1}{|\rho|}, T)$. Under the integral by $(0, x)$, we have
$$
\hat c_{1q}(x, \la) e_q^* (\beta_{11q}^0 u_2^0(t, \rho) - \beta_{21q}^0 u_1^0(t, \rho)) =
\hat c_{1q}(x, \la) (\beta_{11q}^0 e_2(t, \rho) - \beta_{21q}^0 e_1(t, \rho)) (\rho t)^{-\mu_{11}} e_q^*. 
$$
Using \eqref{ecb}, one can easily derive
$$
  \beta_{11q}^0 e_2(t, \rho) - \beta_{21q}^0 e_1(t, \rho) = (\beta_{11q}^0 \beta_{22q}^0 - \beta_{21q}^0 \beta_{12q}^0) \hat c_2(t, \la) (\rho t)^{\mu_{2q}}.
$$
So we have the following integral
\begin{equation} \label{smint}
   \frac{1}{\rho} \int_0^x \hat c_{1q}(x, \la) \hat c_2(t, \la) (\rho x)^{\mu_{1q} - \mu_{2q}} (\rho t)^{\mu_{2q} - \mu_{11}} (\rho t)^{2 \mu_{11}} e_q^*
   Q(t) u_1(t, \rho) \, dt.
\end{equation}
Since $t < x$ and $|\rho x| < 1$, we can estimate 
$$
   |(\rho x)^{\mu_{1q} - \mu_{2q}} (\rho t)^{\mu_{2q} - \mu_{11}}| \le |\rho x|^{\mu_{1q} - \mu_{11}} \le 1.
$$
Consequently, the integral \eqref{smint} is continuous at $x = 0$ and fulfill the estimate $O(\rho^{-\beta})$.
Thus, the lemma is proved for $F_{2q}$. 
\end{proof}

\begin{proof}[Proof of Theorem~\ref{thm:Stokes}]
Let us prove the relations \eqref{asymptB} for $\rho \in \overline{\Omega_0}$. The case $\rho \in \overline{\Omega_{-1}}$ is analogous. 
The relations \eqref{YSB}, \eqref{YSB0} and \eqref{defF1} yield
\begin{multline} \label{smF1}
 	F_{1q}(x, \rho) = e_q^* \sum_{j = 1}^2 (\rho x)^{-\mu_{1q}} C_j(x, \la) (B_{1j}^0(\rho) - B_{1j}(\rho)) + 
 	e_q^* \sum_{j = 1}^2 (\rho x)^{-\mu_{1q}} (S_j(x, \la) - C_j(x, \la)) B_{1j}(\rho), \\ q = \overline{1, m}.
\end{multline}          
Taking $x = 0$ and using \eqref{asymptS1}, \eqref{asymptS2}, \eqref{defc0}, \eqref{defce} and \eqref{asymptF}, we obtain
\begin{equation} \label{smF2}
 	F_{1q}(0, \rho) = \rho^{-\mu_{1q}} c_{10q} e_q^* (B_{11}^0(\rho) - B_{11}(\rho)) = O(\rho^{-\beta}), \quad |\rho| \to \iy.
\end{equation}
This relation implies \eqref{asymptB} for $k = j = 1$. Using \eqref{defF2} and \eqref{smF2}, we get
$$
  	F_{2q}(x, \rho) = \Bigl( F_{1q}(x, \rho) - e_q^* (\rho x)^{-\mu_{1q}} C_1(x, \la) (B_{11}^0(\rho) - B_{11}(\rho)) \Bigr) (\rho x)^{\mu_{1q} - \mu_{2q}}.
$$
Taking $x = 0$ and using \eqref{smF1}, \eqref{asymptS1}, \eqref{asymptS2} and \eqref{asymptF}, we derive the estimate
$$
  	F_{2q}(0, \rho) = \rho^{-\mu_{2q}} c_{2q0} e_q^* (B_{12}^0(\rho) - B_{12}(\rho)) = O(\rho^{-\beta}), \quad |\rho| \to \iy.
$$
Thus, the relation \eqref{asymptB} holds for $k = 1$, $j = 2$. The proof for $k = 2$ is analogous.
\end{proof}

\begin{lem} \label{lem:asymptS}
The following asymptotic formulas hold for $\nu = 0, 1$, $x \in (0, T]$, $\mbox{Re}\, \rho \ge 0$, $|\rho| \to \iy$:
$$
   S_1^{(\nu)}(x, \la) = \frac{i}{2} \bigl( (i \rho)^{\nu} \exp(i \rho x) B_{22}^0 [I]_{\beta} - (-i \rho)^{\nu} \exp(- i \rho x) B_{12}^0 [I]_{\beta} \bigr) (D_1(\rho))^{-1},
$$
$$
   S_2^{(\nu)}(x, \la) = \frac{i}{2} \bigl( -(i\rho)^{\nu} \exp(i \rho x) B_{21}^0 [I]_{\beta} + (-i\rho)^{\nu} \exp(- i \rho x) B_{11}^0 [I]_{\beta} \bigr) (D_2(\rho))^{-1}. 
$$
\end{lem}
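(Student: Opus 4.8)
The plan is to invert the linear relation \eqref{YSB} and feed in the asymptotics already established. Writing \eqref{YSB} in block form, $\bigl[\,Y_1\ \ Y_2\,\bigr]=\bigl[\,S_1\ \ S_2\,\bigr]\,\mathcal{B}(\rho)$, where $\bigl[\,Y_1\ \ Y_2\,\bigr]$, $\bigl[\,S_1\ \ S_2\,\bigr]$ are the $2m\times 2m$ matrices obtained by juxtaposing columns and $\mathcal{B}(\rho)$ is the block matrix with blocks $\mathcal{B}_{11}=B_{11}$, $\mathcal{B}_{12}=B_{21}$, $\mathcal{B}_{21}=B_{12}$, $\mathcal{B}_{22}=B_{22}$. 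Then $\bigl[\,S_1\ \ S_2\,\bigr]=\bigl[\,Y_1\ \ Y_2\,\bigr]\,\mathcal{B}(\rho)^{-1}$, i.e.\ $S_1^{(\nu)}=Y_1^{(\nu)}(\mathcal{B}^{-1})_{11}+Y_2^{(\nu)}(\mathcal{B}^{-1})_{21}$ and $S_2^{(\nu)}=Y_1^{(\nu)}(\mathcal{B}^{-1})_{12}+Y_2^{(\nu)}(\mathcal{B}^{-1})_{22}$, so everything reduces to the asymptotics of $\mathcal{B}(\rho)^{-1}$ combined with the Birkhoff asymptotics \eqref{asymptY}.

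To invert $\mathcal{B}(\rho)$ I would first peel off the diagonal growth. By Theorem~\ref{thm:Stokes}, $B_{kj}(\rho)=D_j(\rho)B_{kj}^0[I]_\beta$, whence $\mathcal{B}(\rho)=\mathrm{diag}(D_1(\rho),D_2(\rho))\bigl(\mathcal{B}^0+O(\rho^{-\beta})\bigr)$, where $\mathcal{B}^0$ is the constant matrix with blocks $B_{11}^0,B_{21}^0$ in the first block-row and $B_{12}^0,B_{22}^0$ in the second — all fixed diagonal matrices, hence pairwise commuting. Since the blocks commute, $\det\mathcal{B}^0=\det(B_{11}^0B_{22}^0-B_{21}^0B_{12}^0)=\prod_{q=1}^m\bigl(\beta_{11q}^0\beta_{22q}^0-\beta_{12q}^0\beta_{21q}^0\bigr)=(-2i)^m\neq 0$ by the scalar identity $\det[\beta_{kjq}^0]_{k,j=1,2}=-2i$ (which follows from \eqref{wronc}, \eqref{wrone}, \eqref{ec0}); moreover each $\beta_{kjq}^0$ is nonzero, since $\beta_{11q}^0\beta_{12q}^0=(4i\sin\nu_q)^{-1}$ and $\beta_{2jq}^0=\exp(i\pi\mu_{jq})\beta_{1jq}^0$ by \eqref{relbeta}. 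So $\mathcal{B}^0$ is invertible with a bounded inverse, given by Cramer's rule for commuting blocks as $(\mathcal{B}^0)^{-1}=\tfrac{i}{2}\bigl(\begin{smallmatrix}B_{22}^0 & -B_{21}^0\\ -B_{12}^0 & B_{11}^0\end{smallmatrix}\bigr)$, and for $|\rho|$ large $\mathcal{B}(\rho)^{-1}=\bigl(I_{2m}+O(\rho^{-\beta})\bigr)(\mathcal{B}^0)^{-1}\,\mathrm{diag}(D_1(\rho)^{-1},D_2(\rho)^{-1})$.

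Finally I would assemble the pieces. Because every $B_{kj}^0$ is a fixed invertible diagonal matrix, all the $O(\rho^{-\beta})$-errors can be pushed to the right past them using $(I+O(\rho^{-\beta}))B=B(I+O(\rho^{-\beta}))$; this gives the four blocks of $\mathcal{B}(\rho)^{-1}$ in the order $(1,1),(1,2),(2,1),(2,2)$ as $\tfrac{i}{2}B_{22}^0[I]_\beta D_1^{-1}$, $-\tfrac{i}{2}B_{21}^0[I]_\beta D_2^{-1}$, $-\tfrac{i}{2}B_{12}^0[I]_\beta D_1^{-1}$, $\tfrac{i}{2}B_{11}^0[I]_\beta D_2^{-1}$. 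Multiplying these on the left by $Y_1^{(\nu)}=(i\rho)^\nu\exp(i\rho x)[I]_\beta$ and $Y_2^{(\nu)}=(-i\rho)^\nu\exp(-i\rho x)[I]_\beta$ from \eqref{asymptY} — legitimate here because for each fixed $x\in(0,T]$ one has $|\rho x|\ge 1$ once $|\rho|$ is large, and $\{\mathrm{Re}\,\rho\ge 0\}=\overline{\Omega_0}\cup\overline{\Omega_{-1}}$ — and collapsing the products via $[I]_\beta B^0[I]_\beta=B^0[I]_\beta$, one reads off exactly the two asserted formulas. The computation is routine; the one point needing care is that the diagonal growth of $D_j(\rho)^{-1}$ must not pollute the $O(\rho^{-\beta})$-remainders, which is precisely why $\mathrm{diag}(D_1,D_2)$ is factored out on the left at the outset and $\mathrm{diag}(D_1^{-1},D_2^{-1})$ is kept at the extreme right at the end. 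No new analytic estimates are needed, since Theorems~\ref{thm:Y} and~\ref{thm:Stokes} already supply all of them.
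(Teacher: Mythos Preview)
Your proof is correct and follows essentially the same route as the paper: invert the relation \eqref{YSB} to express $S_j$ as combinations of $Y_k$, feed in the Stokes asymptotics \eqref{asymptB} for the coefficients, and then apply the Birkhoff asymptotics \eqref{asymptY}. You are simply more explicit than the paper about the block-matrix inversion and about keeping the $D_j(\rho)^{-1}$ factors at the extreme right so they do not contaminate the $O(\rho^{-\beta})$ remainders; the paper records the inverted relations $S_1=Y_1 B_{22}^*-Y_2 B_{12}^*$, $S_2=-Y_1 B_{21}^*+Y_2 B_{11}^*$ with $B_{kj}^*$ already carrying the $\tfrac{i}{2}$, the $[I]_\beta$, and the appropriate $D^{-1}$ factor, and then plugs in \eqref{asymptY}.
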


\begin{proof}
Using \eqref{YSB} and \eqref{asymptB}, one can easily derive the relations
$$
 	S_1(x, \la) = Y_1(x, \rho) B_{22}^*(\rho) - Y_2(x, \rho) B_{12}^*(\rho), 
$$
$$
  	S_2(x, \la) = -Y_1(x, \rho) B_{21}^*(\rho) + Y_2(x, \rho) B_{11}^*(\rho),
$$
where
$$
 	B^*_{kj}(\rho) = B_{kj}^0 [I]_{\beta} (D_j(\rho))^{-1}, \quad j, k = 1, 2.
$$
Using \eqref{asymptY}, we immediately arrive at the claim of the lemma.
\end{proof}

\bigskip

{\large \bf 5. Spectral data}

\bigskip

In this section, we introduce boundary conditions for equation \eqref{eqv} and the
notion of spectral data for the boundary value problem.

Introduce the linear forms 
$\sigma_1(Y) := -\langle S_2^*(x, \la), Y \rangle$, 
$\sigma_2(Y) := \langle S_1^*(x, \la), Y \rangle$.
In view of \eqref{wronS}, we have $\sigma_j(S_k) = \de_{jk} I$.
Note that for the classical matrix Sturm-Liouville equation
we have $\sigma_1(Y) = Y(0)$, $\sigma_2(Y) = Y'(0)$.

Consider the boundary value problem $L = L(Q, h, H)$ for equation \eqref{eqv} with the boundary conditions
\begin{gather} \label{BCU}
 	U(Y) := \sigma_2(Y) - h \sigma_1(Y) = 0, \\
 	\nonumber
   V(Y) := Y'(T) + H Y(T) = 0,
\end{gather}
where $h$ and $H$ are $m \times m$ matrices.
One can also take the Dirichlet-type boundary condition $\sigma_1(Y) = 0$ at $x = 0$.
If $\nu_m \ge \frac{1}{2}$, it is equivalent to the standard Dirichlet boundary condition $Y(0) = 0$.
Similarly, one can investigate the matrix Sturm-Liouville equation with Bessel-type singularities at the both ends of the interval.
Then both boundary conditions take the form similar to \eqref{BCU}.
 
Let $\Phi(x, \la)$ be the matrix solution of equation \eqref{eqv}, satisfying the conditions $U(\Phi) = I$, $V(\Phi) = 0$.
The matrix function $M(\lambda) := \sigma_1(\Phi(x, \lambda))$. 
is called the {\it Weyl matrix} of the problem $L$.
The Weyl matrix is a natural spectral characteristic for matrix Sturm-Liouville operators (see \cite{Bond11, Bond15-2}).
It generalizes the notion of the Weyl function for the scalar case $m = 1$ (see \cite{Mar77, FY01}).

Let $\vv(x, \la)$ be the matrix solution of equation \eqref{eqv} under initial conditions 
$\sigma_1(\vv) = I$, $\sigma_2(\vv) = h$. Obviously, 
\begin{equation} \label{relphi}
	\vv(x, \la) = S_1(x, \la) + S_2(x, \la) h,
\end{equation}
\begin{equation} \label{relPhi}
\Phi(x, \la) = S_2(x, \la) + \vv(x, \la) M(\la), \quad M(\la) = -(V(\vv))^{-1} V(S_2).
\end{equation}

The eigenvalues of the boundary value problem $L$ coincide with the zeros of the characteristic function
$\Delta(\la) := V(\vv(x, \la))$. We denote the eigenvalues, counting with their multiplicities,
by $\{ \la_p \}_{p \ge 1}$, $|\la_p| \le |\la_{p + 1}|$.
The Weyl matrix $M(\la)$ is meromorphic in $\la$ and its poles coincide 
with the eigenvalues of $L$. Assume that all the poles of $M(\la)$ are simple.
Note that in this case the multiplicities of the eigenvalues (the number of the corresponding vector eigenfunctions)
equal to the multiplicities of the zeros of the analytic function $\Delta(\la)$
(the proof is similar to \cite[Lemma 4]{Bond11}).
We will call the residues $\al_p := \Res\limits_{\la = \la_p} M(\la)$ the {\it weight matrices}.
The collection $\{ \la_p, \al_p \}_{p \ge 1}$ is called {\it the spectral data} of the problem $L$.
One can easily show that
\begin{equation} \label{Mseries}
 	M(\la) = \sum_{p = 1}^{\iy} \frac{\al_p}{m_p(\la - \la_p)}.
\end{equation}
Here $m_p$ is the multiplicity of the eigenvalue $\la_p$.

In \cite{Bond15} the inverse problem is studied, which consists in recovering of the boundary value problem $L$
from the Weyl matrix. In view of \eqref{Mseries}, this problem is equivalent to the following one.

\medskip

{\bf Inverse problem.} Given the spectral data $\{ \la_p, \al_p \}_{p \ge 1}$, determine $Q$, $h$ and $H$.

\medskip

Moreover, we will show that the matrix $\omega$ can also be determined uniquely from the spectral data.

We plan to devote a separate paper to the formulated inverse problem.
In the next two sections of this paper, we derive asymptotic formulas for $\la_p$ and $\al_p$.
They will be useful for the solution of the inverse problem.

\bigskip

{\large \bf 6. Asymptotics of the eigenvalues}

\bigskip

In order to obtain asymptotic formulas for the eigenvalues, we need the following auxiliary result.

\begin{lem} \label{lem:poly}
Let $\{ \de_n \}_{n \ge 1}$ and $\{ \ga_n \}_{n \ge 1}$ be two sequences of nonzero numbers, such that
\begin{equation} \label{poly}
 	\de_n^k + a_{1n} \de_n^{k-1} + a_{2n} \de_n^{k - 2} + \dots + a_{k-1, n} \de_n + a_{kn} = 0, \quad n \in \mathbb{N},
\end{equation}
where
$$
 	a_{jn} = \sum_{l = 0}^j O(\ga_n^l) \cdot o(\de_n^{j - l}).
$$ 
Then $\de_n = O(\ga_n)$ as $n \to \iy$.
\end{lem}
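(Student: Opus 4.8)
The plan is to normalize the relation \eqref{poly} by dividing through by $\de_n^k$ (legitimate since every $\de_n \ne 0$), obtaining the identity
$$1 + \sum_{j=1}^k a_{jn} \de_n^{-j} = 0, \qquad n \in \mathbb{N}.$$
Thus it suffices to show that the sum on the left would tend to $0$ along any subsequence on which $|\de_n / \ga_n| \to \iy$, which contradicts the identity; this forces $|\de_n/\ga_n|$ to stay bounded, i.e. $\de_n = O(\ga_n)$.

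First I would unpack the hypothesis on the coefficients. For each $j$ and each $l \in \{0,\dots,j\}$, fix representatives realizing the decomposition of $a_{jn}$, writing the $l$-th summand as $b_{jln} c_{jln}$ with $|b_{jln}| \le C_{jl} |\ga_n|^{l}$ for $n$ large and $\varepsilon_{jln} := c_{jln} \de_n^{-(j-l)} \to 0$ as $n \to \iy$ (both well defined since $\ga_n, \de_n \ne 0$). Setting $t_n := \ga_n/\de_n$ and $\tilde b_{jln} := b_{jln} \ga_n^{-l}$ (bounded for $n$ large), a one-line computation gives
$$a_{jn} \de_n^{-j} = \sum_{l=0}^{j} \tilde b_{jln}\, \varepsilon_{jln}\, t_n^{\,l},$$
so each term is a bounded factor times a factor tending to $0$ times a power of $t_n$.

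Now suppose, for contradiction, that $\de_n = O(\ga_n)$ fails. Then there is a subsequence $(n_i)$ with $|t_{n_i}| \to 0$; for $i$ large we have $|t_{n_i}| \le 1$, hence $|t_{n_i}|^{l} \le 1$ for all the finitely many relevant $l$, and therefore
$$\Bigl| \sum_{j=1}^{k} a_{jn_i}\de_{n_i}^{-j} \Bigr| \le \sum_{j=1}^{k} \sum_{l=0}^{j} |\tilde b_{jln_i}|\,|\varepsilon_{jln_i}| \longrightarrow 0,$$
since there are finitely many indices, each $|\tilde b_{jln_i}|$ is bounded, and each $\varepsilon_{jln_i} \to 0$. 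This contradicts $1 + \sum_{j=1}^k a_{jn_i}\de_{n_i}^{-j} = 0$, completing the argument.

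I do not expect a genuine obstacle here: the whole content is elementary. The only point requiring care is the bookkeeping of the $O$- and $o$-symbols — choosing the representatives $b_{jln}, c_{jln}$ once and for all and checking that the "bounded" factors $\tilde b_{jln}$ and the "small" factors $\varepsilon_{jln}$ behave well uniformly over the finitely many pairs $(j,l)$, so that the final sum really does go to zero.
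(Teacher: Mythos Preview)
Your argument is correct and follows essentially the same route as the paper: assume the conclusion fails, pass to a subsequence with $\ga_{n_i}/\de_{n_i}\to 0$, and observe that then each $a_{jn_i}\de_{n_i}^{-j}$ is $o(1)$, contradicting the normalized identity $1+\sum_j a_{jn_i}\de_{n_i}^{-j}=0$. The paper simply states this in the compressed form ``$a_{n_pj}=o(\de_{n_p}^j)$ hence $\de_{n_p}^k=o(\de_{n_p}^k)$'', whereas you unpack the bookkeeping explicitly; the substance is identical.
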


\begin{proof}
Suppose that, on the contrary, there exists such subsequence $\{ \de_{n_p} \}_{p \ge 1}$, that
$\ga_{n_p} = o(\de_{n_p})$ as $p \to \iy$. Then $a_{n_p j} = o(\de_{n_p}^j)$ and,
in view of \eqref{poly}, $\de_{n_p}^k = o(\de_{n_p}^k)$, that is impossible. Hence $\de_n = O(\ga_n)$, $n \to \iy$. 
\end{proof}

\begin{thm} \label{thm:eigen}
The eigenvalues $\{ \la_p \}_{p \ge p_0}$, starting from some number $p_0$, can be renumbered
by two indices $(n, q)$ in such a way, that the following relations hold:
\begin{equation} \label{asymptrho}
 	\rho_{nq} := \sqrt{\la_{nq}} = \frac{\pi}{T}\Bigl(n + \left\{ \frac{\mu_{1q}}{2} \right\} + O(n^{-\beta}) \Bigl), \quad n \in \mathbb{N}, \, n \ge n_0, \, q = \overline{1, m},
\end{equation}
where $\{ . \}$ stands for the fractional part: $\{ \frac{\mu_{1q}}{2} \} \in [0, 1)$, 
$ \frac{\mu_{1q}}{2} - \{ \frac{\mu_{1q}}{2} \} \in \mathbb{Z}$.
\end{thm}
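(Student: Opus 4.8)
The plan is to realize the eigenvalues as zeros of the characteristic function, insert the large‑$\rho$ asymptotics of the Bessel‑type solutions from Lemma~\ref{lem:asymptS}, and compare with the unperturbed problem $Q = h = H = 0$. By \eqref{relphi}, $\vv(x,\la) = S_1(x,\la) + S_2(x,\la)\,h$, so the characteristic function is
\[
\Delta(\la) = V(\vv(x,\la)) = S_1'(T,\la) + S_2'(T,\la)\,h + H\bigl( S_1(T,\la) + S_2(T,\la)\,h \bigr),
\]
and the eigenvalues are the zeros of $\det\Delta(\la)$. Substituting Lemma~\ref{lem:asymptS}, the principal part (of order $\rho$) comes entirely from $S_1'(T,\la)$: the summands containing $S_2$ carry the extra factor $(D_2(\rho))^{-1}$ and those containing $H$ are of order $\rho^0$, hence all of them are absorbed into the remainder relative to $S_1'(T,\la)(D_1(\rho))^{-1}$. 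This gives
\[
\Delta(\la) = -\tfrac{\rho}{2}\Bigl( e^{i\rho T} B_{22}^0 + e^{-i\rho T} B_{12}^0 + O(\rho^{-\beta}) \Bigr)(D_1(\rho))^{-1}, \qquad \mbox{Re}\,\rho \ge 0,\ |\rho|\to\iy,
\]
and, since $\rho$ and $D_1(\rho)$ are invertible for large $|\rho|$, the eigenvalues coincide with the zeros of $\det\widetilde\Delta(\rho)$, where $\widetilde\Delta(\rho) := e^{i\rho T} B_{22}^0 + e^{-i\rho T} B_{12}^0 + O(\rho^{-\beta})$.

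Next I would read off the unperturbed zeros. The matrices $B_{22}^0 = \mbox{diag}\{\beta_{22q}^0\}_{q=1}^m$ and $B_{12}^0 = \mbox{diag}\{\beta_{12q}^0\}_{q=1}^m$ are diagonal; by \eqref{relbeta}, $\beta_{22q}^0 = \exp(i\pi\mu_{2q})\,\beta_{12q}^0$, while $\beta_{12q}^0\ne 0$ and $\mu_{1q}+\mu_{2q}=1$. Hence the principal term of $\widetilde\Delta(\rho)$ equals $\mbox{diag}\bigl\{ \beta_{12q}^0\bigl( e^{i\pi\mu_{2q}} e^{i\rho T} + e^{-i\rho T} \bigr) \bigr\}_{q=1}^m$, whose determinant vanishes exactly when $e^{2i\rho T} = e^{i\pi\mu_{1q}}$ for some $q$, i.e. at the real points $\rho_{nq}^0 = \frac{\pi}{T}\bigl( n + \tfrac{\mu_{1q}}{2} \bigr)$, $n\in\mathbb Z$, $q=\overline{1,m}$.

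To transfer this to $\widetilde\Delta$ itself I would run a Rouché‑type zero‑counting argument. A zero of $\det\widetilde\Delta$ must have bounded imaginary part (otherwise one exponential dominates and the principal matrix is invertible), so the zeros accumulate along the real axis near the $\rho_{nq}^0$. For large $n$ I would surround each cluster of the numbers $\rho_{nq}^0$ by a contour $\Gamma_n$ of fixed small radius; on $\Gamma_n$ the principal matrix is invertible with inverse of norm $O(1)$, so $\det\widetilde\Delta$ and the principal determinant have the same number of zeros, counted with multiplicity, inside $\Gamma_n$. This yields a renumbering of $\{\la_p\}_{p\ge p_0}$ by $(n,q)$ with $\rho_{nq} = \rho_{nq}^0 + O(1)$. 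To sharpen the remainder, set $\rho = \rho_{nq}^0 + \delta_{nq}$ in $\det\widetilde\Delta(\rho)=0$ and expand the $m\times m$ determinant: the off‑diagonal entries of $\widetilde\Delta$ are $O(\rho^{-\beta})$, the diagonal entry indexed by $q$ equals $\beta_{12q}^0(e^{i\pi\mu_{2q}}e^{i\rho T}+e^{-i\rho T}) + O(\rho^{-\beta})$ and vanishes simply at $\rho_{nq}^0$, and the diagonal entries indexed by the other members of the cluster vanish at their (nearby) points $\rho_{nq'}^0$. Collecting terms produces a relation of the form \eqref{poly} in which the role of $\de_n$ is played by $\delta_{nq}$ and that of $\ga_n$ by $n^{-\beta}$; Lemma~\ref{lem:poly} then gives $\delta_{nq} = O(n^{-\beta})$, which is \eqref{asymptrho} once $\mu_{1q}/2$ is replaced by its fractional part and the resulting integer shift is absorbed into $n$ (with $n_0$ and $p_0$ adjusted).

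The main obstacle is precisely the clustering of the $\rho_{nq}^0$ over different $q$, already flagged in the introduction: when several fractional parts $\{\mu_{1q}/2\}$ coincide or are close, the corresponding eigenvalues cannot be separated by shrinking the contours, so both the counting (performed on contours of a fixed radius) and the final sharpening (carried out through a single polynomial relation \eqref{poly} handling an entire cluster at once, which is exactly what Lemma~\ref{lem:poly} is designed for) must be organized cluster by cluster rather than eigenvalue by eigenvalue.
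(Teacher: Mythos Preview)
Your proposal is correct and follows essentially the same route as the paper: derive the asymptotics of $V(\vv)$ from Lemma~\ref{lem:asymptS} and \eqref{relphi}, compare with the diagonal unperturbed determinant whose zeros are $\rho_{nq}^0=\frac{\pi}{T}\bigl(n+\{\mu_{1q}/2\}\bigr)$, localize via Rouch\'e, and sharpen the remainder with Lemma~\ref{lem:poly}. The only point the paper makes more explicit is the \emph{global} zero counting: besides the small circles around each $\rho_{nq}^0$, it also applies Rouch\'e on large contours $\Gamma_R=\partial\{\,\mbox{Re}\,\rho>0,\ 1<|\rho|<R\,\}$ to guarantee that no eigenvalues escape between the small circles, which is what turns your ``zeros accumulate near the $\rho_{nq}^0$'' into an actual bijection.
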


\begin{proof}
Let us start with an asymptotic formula for $\Delta(\la)$. It follows from Lemma~\ref{lem:asymptS} and \eqref{relphi}, that
$$
 	\vv^{(\nu)}(x, \la) = \frac{i}{2} \bigl( (i \rho)^{\nu} \exp(i \rho x) B_{22}^0 [I]_{\beta} - (-i \rho)^{\nu} \exp(-i \rho x) B_{12}^0 [I]_{\beta} \bigr) (D_1(\rho))^{-1},
$$
where $\nu = 0, 1$, $x \in (0, T]$, $\mbox{Re}\, \rho \ge 0$, $|\rho| \to \iy$.
Consequently,
\begin{equation} \label{asymptVphi}
 	V(\vv) = -\frac{\rho}{2} \bigl( \exp(i \rho T) B_{22}^0 [I]_{\beta} + \exp(-i \rho T) B_{12}^0 [I]_{\beta} \bigr) (D_1(\rho))^{-1},
\end{equation}
\begin{equation} \label{asymptDelta}
  	\Delta(\rho^2) = (-2)^m \rho^P \det( \exp(i \rho T) B_{22}^0 + \exp(-i \rho T) B_{12}^0 + O(\exp(|\tau| T))), \quad |\rho| \to \iy, 
\end{equation}
where $P := \sum\limits_{q = 1}^m \mu_{2q}$, $\tau = \mbox{Im}\, \rho$.
The function 
$$
   \Delta_0(\rho) := (-2)^m \rho^P \det( \exp(i \rho T) B_{22}^0 + \exp(-i \rho T) B_{12}^0)
$$
is analytic for $\mbox{Re} \rho > 0$, and using \eqref{relbeta}, one can easily find its zeros (counting with their multiplicities):
\begin{equation*}
 	\rho_{nq}^0 = \frac{\pi}{T}\Bigl(n + \left\{ \frac{\mu_{1q}}{2} \right\} \Bigr), \quad n \in \mathbb{N}, \, q = \overline{1, m}. 
\end{equation*}
It is easy to show that 
\begin{equation} \label{Deltalow}
   |\Delta_0(\rho)| \ge C_{\de} |\rho|^P \exp(m |\tau| T),
\end{equation}
$$
\rho \in G_{\de} := \{ \rho \colon \mbox{Im}\,\rho \ge 0, \, |\rho| \ge \de, \, 
   |\rho - \rho_{nq}^0| \ge \de, \, n \in \mathbb{N}, \, q = \overline{1, m} \}, \quad \de > 0.
$$
The estimates \eqref{asymptDelta} and \eqref{Deltalow} imply
$|\Delta(\rho^2) - \Delta_0(\rho)| < |\Delta_0(\rho)|$ for sufficiently large $\rho \in G_{\de}$.
Consider the contours $\ga_{nq} = \{ \rho \colon |\rho - \rho_{nq}^0| = \de \}$, where $\de$ is so small, that $\ga_{nq}$ 
with distinct centers do not intersect. By virtue of Rouche's theorem, the function $\Delta(\rho^2)$ and
$\Delta_0(\rho)$ have the same number of zeros inside the contours $\ga_{nq}$ for sufficiently large $n \ge n_0$.
Applying Rouche's theorem for the contours
$$
 	\Gamma_R := \partial S_R, \quad S_R := \{ \rho \colon \mbox{Re}\,\rho > 0, \, |\rho| > 1, \, |\rho| < R \}
$$ 
for such $R$ that $\Gamma_R \subset G_{\de}$, we show that the functions $\Delta(\rho^2)$ and $\Delta_0(\rho)$ have the
same number of zeros in $S_R$. Consequently, all zeros of $\Delta(\rho^2)$ with sufficiently large $|\rho|$ lie
inside the circles $\ga_{nq}$. Since $\de > 0$ can be chosen arbitrarily small, the square roots of the eigenvalues $\la_p$, $p \ge p_0$
admit the asymptotic representations
\begin{equation} \label{rho0}
 	\rho_{nq} = \rho_{nq}^0 + \de_{nq}, \quad n \ge n_0, \, q = \overline{1, m}, 
\end{equation}
where $\de_{nq} = o(1)$ as $n \to \iy$.

It remains to show that $\de_{nq} = O(n^{-\beta})$. Fix $q = \overline{1, m}$.
Substitute $\rho = \rho_{nq}$ in the form \eqref{rho0} into the relation $\Delta(\rho^2) = 0$.
Using the asymptotic formula \eqref{asymptDelta}, one can easily obtain for 
the sequence $\{ \de_{nq} \}_{n \ge n_0}$ the relations in the form \eqref{poly}
with $\ga_n = n^{-\beta}$. Applying Lemma~\ref{lem:poly}, we arrive at the assertion of the theorem.
\end{proof}

\begin{cor}
The number $p_0 - n_0 m$, which characterizes the ``shift'' of the spectrum with respect to the sequence $\{ (\rho_{nq}^0)^2 \}_{n \ge 0, q = \overline{1, m}}$,
does not depend on $Q(x)$, $h$ and $H$. It depends only on the matrix $\omega$.
\end{cor}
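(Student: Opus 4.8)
The plan is to compare two boundary value problems $L = L(Q, h, H)$ and $\tilde L = L(\tilde Q, \tilde h, \tilde H)$ sharing the same matrix $\omega$ and to show that they have the same value of $p_0 - n_0 m$. First I would note that this number is a genuine invariant of the problem: increasing $n_0$ by $1$ removes exactly the $m$ eigenvalues $\la_{n_0, 1}, \dots, \la_{n_0, m}$ from the renumbered family, and these (being the smallest members of that family) are $\la_{p_0}, \dots, \la_{p_0 + m - 1}$, so $p_0$ increases by $m$ and $p_0 - n_0 m$ is unchanged; thus the value is the same for every admissible choice. Next, observe that the comparison function $\Delta_0(\rho)$ and its zeros $\rho_{nq}^0$ from the proof of Theorem~\ref{thm:eigen} depend only on $\omega$ (through $B_{12}^0$, $B_{22}^0$ and $P$), not on $Q$, $h$, $H$, and that the same $\Delta_0$ serves as the leading term of $\Delta(\rho^2)$ for both $L$ and $\tilde L$.

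The core is a Rouché count in the $\la$-plane. The characteristic functions $\Delta_L(\la) = V(\vv(x,\la))$ and $\Delta_{\tilde L}(\la)$ are entire in $\la$, and their zeros are exactly the eigenvalues (with multiplicities) of $L$ and $\tilde L$. Fix a small $\de > 0$ and let $R \to \iy$ along values with $|R - \rho_{nq}^0| \ge \de$ for all $n, q$. Along the semicircle $|\rho| = R$, $\mbox{Re}\,\rho \ge 0$, the estimates used in the proof of Theorem~\ref{thm:eigen} (the asymptotics \eqref{asymptVphi}, \eqref{asymptDelta} and the lower bound \eqref{Deltalow}, together with its analogue for $\mbox{Im}\,\rho \le 0$) give $\Delta_L(\rho^2) = \Delta_0(\rho)(1 + o(1))$ and $\Delta_{\tilde L}(\rho^2) = \Delta_0(\rho)(1 + o(1))$ uniformly, while $\Delta_0(\rho) \ne 0$ there (its zeros in $\mbox{Re}\,\rho \ge 0$ being $\rho = 0$ and isolated points of the positive real axis). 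Hence $|\Delta_L(\la) - \Delta_{\tilde L}(\la)| < |\Delta_{\tilde L}(\la)|$ on $|\la| = R^2$, and by Rouché's theorem $L$ and $\tilde L$ have the same number $\mathcal M(R)$ of eigenvalues in $\{ |\la| < R^2 \}$.

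It remains to relate $\mathcal M(R)$ to $p_0$ and $n_0$. For $R$ large, the $p_0 - 1$ ``small'' eigenvalues $\la_1, \dots, \la_{p_0 - 1}$ all lie in $\{ |\la| < R^2 \}$, and every other eigenvalue is some $\la_{nq}$ with $n \ge n_0$. By \eqref{asymptrho}, $\rho_{nq} = \rho_{nq}^0 + O(n^{-\beta})$, so for our choice of $R$ we have $|\la_{nq}| < R^2 \iff \rho_{nq}^0 < R$; consequently
$$
\mathcal M(R) = (p_0 - 1) + \#\{(n, q) : n \ge n_0, \ \rho_{nq}^0 < R\} = (p_0 - 1) - (n_0 - 1) m + \Psi(R),
$$
where $\Psi(R) := \#\{(n, q) : n \ge 1, \ \rho_{nq}^0 < R\}$ depends only on $\omega$ and $R$ (and we used that all $\rho_{nq}^0$ with $n \le n_0 - 1$ are below $R$). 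Writing the same identity for $\tilde L$ and subtracting, the $\Psi(R)$ terms cancel and we get $(p_0^L - 1) - (n_0^L - 1) m = (p_0^{\tilde L} - 1) - (n_0^{\tilde L} - 1) m$, i.e. $p_0^L - n_0^L m = p_0^{\tilde L} - n_0^{\tilde L} m$. Since $L$ and $\tilde L$ were arbitrary with the given $\omega$, the number $p_0 - n_0 m$ depends only on $\omega$; the identity above also exhibits it as the constant shift of the eigenvalue counting function with respect to the reference sequence $\{ (\rho_{nq}^0)^2 \}$.

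The step I expect to be the main obstacle is the uniform asymptotic control of $\Delta_L(\la) / \Delta_0(\sqrt\la)$ on the large circles $|\la| = R^2$: one must extend the estimates of Theorem~\ref{thm:eigen} over the whole semicircle $\mbox{Re}\,\rho \ge 0$, in particular into the fourth quadrant $\mbox{Im}\,\rho < 0$ (which is not literally inside the set $G_\de$), and verify that the exceptional disks $|\rho - \rho_{nq}^0| < \de$ are avoided uniformly along such semicircles once $\mbox{dist}(R, \{\rho_{nq}^0\}) \ge \de$. Everything else reduces to bookkeeping on top of Theorem~\ref{thm:eigen}.
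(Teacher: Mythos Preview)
Your argument is correct and follows essentially the same route as the paper: both show $p_0 - n_0 m$ is independent of the choice of $n_0$, then apply Rouch\'e's theorem in the $\la$-plane on a large circle $|\la| = R$ (with the square-root semicircle avoiding the points $\rho_{nq}^0$) to conclude that two problems with the same $\omega$ have the same eigenvalue count inside. The only cosmetic difference is that the paper fixes the comparison problem to be $\tilde L = L(0,0,0)$ rather than an arbitrary $\tilde L$, and leaves the final bookkeeping implicit, whereas you compare two arbitrary problems and spell out the identity $\mathcal M(R) = (p_0 - 1) - (n_0 - 1)m + \Psi(R)$ explicitly; your concern about the fourth quadrant is handled because the asymptotics in Lemma~\ref{lem:asymptS} and hence \eqref{asymptDelta} are stated for all $\mbox{Re}\,\rho \ge 0$, and the lower bound \eqref{Deltalow} extends there by the obvious symmetry in $\tau$.
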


\begin{proof}
First of all, note that the number $p_0 - n_0 m$ does not depend on the choice of $n_0$.

Consider the boundary value problem $\tilde L := L(\tilde Q, \tilde h, \tilde H)$, $\tilde Q = \tilde h = \tilde H = 0$.
Its characteristic function $\tilde \Delta(\rho^2)$ satisfies the estimates \eqref{asymptDelta} and \eqref{Deltalow}
for sufficiently large $\rho \in G_{\de}$. 
 Therefore one can apply Rouche's theorem to the functions $\tilde \Delta(\la)$ and $\Delta(\la) - \tilde \Delta(\la)$ on the contour $\Theta_R := \{ \la \colon |\la| = R \}$ with sufficiently large $R$,
such that $\{ \rho \colon \rho^2 \in \Theta_R \} \subset G_{\de}$. Thus, the problems $L$ and $\tilde L$ have the same number of
the eigenvalues in the region $\mbox{int}\, \Theta_R$, and the other eigenvalues of the both problems satisfy \eqref{asymptrho}.
This fact immediately yields the claim.
\end{proof}

\begin{cor}
If all the numbers $\{ \frac{\mu_{1q}}{2} \}$, $q = \overline{1, m}$, are distinct, then the poles $\la_{nq}$ of the Weyl matrix $M(\la)$ 
are simple for sufficiently large values of $n$.
\end{cor}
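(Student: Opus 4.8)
\emph{Proof proposal.} The plan is to show that, under the distinctness hypothesis, every $\la_{nq}$ with $n$ large is a \emph{simple} zero of the characteristic function, and then to deduce from the representation $M(\la) = -(V(\vv(x,\la)))^{-1} V(S_2(x,\la))$ that the corresponding pole of $M$ has order exactly one.

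First I would analyse the model function $\Delta_0(\rho)$ introduced in the proof of Theorem~\ref{thm:eigen}. Since $B_{22}^0$ and $B_{12}^0$ are diagonal, $\det(\exp(i\rho T)B_{22}^0 + \exp(-i\rho T)B_{12}^0)$ factorizes into a product over $q = \overline{1,m}$, and by \eqref{relbeta} the $q$-th factor equals $\beta_{12q}^0 \exp(-i\rho T)\bigl(\exp(2i\rho T + i\pi\mu_{2q}) + 1\bigr)$, with $\beta_{12q}^0 \ne 0$ because $\nu_q \notin \mathbb{N}$. Each such factor has only simple zeros, and the zeros of the $q$-th one are precisely the points $\rho_{nq}^0 = \tfrac{\pi}{T}(n + \{\mu_{1q}/2\})$. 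When the fractional parts $\{\mu_{1q}/2\}$ are pairwise distinct, the arithmetic progressions $\{\rho_{nq}^0\}_n$ attached to different indices $q$ are disjoint and separated by a fixed positive distance; consequently, for $|\rho|$ large, $\Delta_0$ has a simple zero at each $\rho_{nq}^0$ and no other zero in a fixed neighbourhood of it.

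Next I would reuse the Rouch\'e argument already carried out in the proof of Theorem~\ref{thm:eigen}, but now with $\de$ chosen below one third of the minimal gap between the $\rho_{nq}^0$, so that the circles $\ga_{nq} = \{ \rho \colon |\rho - \rho_{nq}^0| = \de \}$ are pairwise disjoint and each encloses only its own centre. Since $|\Delta(\rho^2) - \Delta_0(\rho)| < |\Delta_0(\rho)|$ on $\ga_{nq}$ for $n \ge n_0$, the function $\Delta(\rho^2)$ has, counting multiplicity, exactly one zero inside $\ga_{nq}$, namely $\rho_{nq}$; hence $\rho_{nq}$ is a simple zero. Because $\rho_{nq} \to \iy$, the map $\rho \mapsto \la = \rho^2$ is biholomorphic near $\rho_{nq}$, so $\la_{nq}$ is a simple zero of the characteristic function, i.e.\ an eigenvalue of multiplicity one.

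Finally, writing $M(\la) = -\bigl(\det V(\vv(x,\la))\bigr)^{-1}\,\mathrm{adj}\bigl(V(\vv(x,\la))\bigr)\,V(S_2(x,\la))$, the numerator is entire, and by \eqref{asymptVphi} the scalar $\det V(\vv(x,\la))$ vanishes at $\la_{nq}$ to the same (now first) order as the characteristic function; therefore the pole of $M$ at $\la_{nq}$ has order at most one, and since $\la_{nq}$ is an eigenvalue and the poles of $M$ coincide with the eigenvalues, the order is exactly one. Thus all the poles $\la_{nq}$ of $M(\la)$ with $n$ sufficiently large are simple. The only place where the hypothesis genuinely enters is the simplicity of the zeros of $\Delta_0$; the one step that needs a little care is the transfer from a simple zero of the scalar characteristic function to a simple pole of the matrix function $M$, which is handled cleanly by the adjugate formula together with the local invertibility of $\rho \mapsto \rho^2$ for large $\rho$.
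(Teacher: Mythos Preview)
The paper states this corollary without proof, treating it as an immediate consequence of the asymptotics \eqref{asymptrho} together with the Rouch\'e comparison already performed in the proof of Theorem~\ref{thm:eigen}. Your argument is precisely the natural way to make this explicit: distinctness of the fractional parts $\{\mu_{1q}/2\}$ makes the zeros $\rho_{nq}^0$ of the diagonal model function $\Delta_0$ simple and uniformly separated, so the Rouch\'e estimate on the circles $\gamma_{nq}$ forces $\Delta(\rho^2)$ to have exactly one zero inside each, hence $\la_{nq}$ is a simple zero of $\Delta(\la)$; the adjugate representation of $M(\la)=-(V(\varphi))^{-1}V(S_2)$ combined with the paper's assertion that the poles of $M$ coincide with the eigenvalues then gives a pole of order exactly one. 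Your proof is correct and fills in exactly what the paper leaves to the reader; there is nothing materially different to compare.
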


\bigskip

{\bf 7. Asymptotics of the weight matrices}

\bigskip

Consider the eigenvalues $\la_{nq}$, $n \ge n_0$, $q = \overline{1, m}$, 
numbered in accordance with Theorem~\ref{thm:eigen}. Let $m_{nq}$ be their multiplicities,
and
$$
 	\al_{nq} = \Res_{\la = \la_{nq}} M(\la).
$$

Denote $J_q = \Bigl\{ s \colon \left\{ \frac{\mu_{1s}}{2} \right\} = \left\{ \frac{\mu_{1q}}{2} \right\} \Bigr\}$. 
In this section, we calculate the asymptotics of the sums $\sum\limits_{s \in J_q} m_{ns}^{-1} \al_{ns}$
as $n \to \iy$. Each of these sums corresponds to the values $\rho_{nq}$ inside one of the contours 
$\ga_{nq} = \{ \rho \colon |\rho - \rho_{nq}^0| = \de \}$, 
defined in the proof of Theorem~\ref{thm:eigen}. The matrices $\al_{ns}$ are multiplied by $m_{ns}^{-1}$
in order to count the residues at equal poles only once.

\begin{thm} \label{thm:weight}
The following relation holds for $n \ge n_0$:
\begin{equation} \label{asymptal}
\sum\limits_{s \in J_q} m_{ns}^{-1} \al_{ns} = \frac{\pi n}{T} D_1\Bigl(\frac{\pi n}{T}\Bigr) (A_q + O(n^{-\beta})) \Bigl(D_2\Bigl(\frac{\pi n}{T}\Bigr)\Bigr)^{-1}, \quad q = \overline{1, m},
\end{equation}
where $A_q = [A_{q, jk}]_{j, k = 1}^m$,
$$
 	A_{q, jk} = \begin{cases}
					\theta_j, \quad j = k \: \text{and} \: j \in J_q, \\
					0, \quad \text{otherwise},
 				\end{cases}
 	\quad \theta_j = - \frac{\beta_{11j}^0}{i T \beta_{12j}^0} (1 - \exp(- 2 \pi i \nu_j)) \ne 0.
$$
\end{thm}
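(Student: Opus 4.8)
The plan is to compute the sum of residues $\sum_{s\in J_q} m_{ns}^{-1}\alpha_{ns}$ by a contour-integral representation over the circle $\gamma_{nq}=\{\rho\colon |\rho-\rho_{nq}^0|=\delta\}$, using the asymptotic formulas for $V(\varphi)$ and $V(S_2)$ coming from Lemma~\ref{lem:asymptS}, and the fact (from the proof of Theorem~\ref{thm:eigen}) that for large $n$ exactly the eigenvalues $\lambda_{ns}$ with $s\in J_q$ lie inside $\gamma_{nq}$. First I would write, using \eqref{relPhi}, $M(\lambda)=-(V(\varphi))^{-1}V(S_2)$, and note that $\sum_{s\in J_q} m_{ns}^{-1}\alpha_{ns}=\frac{1}{2\pi i}\oint_{\Lambda_{nq}} M(\lambda)\,d\lambda$, where $\Lambda_{nq}$ is the image of $\gamma_{nq}$ under $\rho\mapsto\rho^2$; changing variables to $\rho$ this becomes $\frac{1}{2\pi i}\oint_{\gamma_{nq}} M(\rho^2)\,2\rho\,d\rho$. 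So the task reduces to obtaining a sufficiently precise asymptotic expansion of $M(\rho^2)$ on $\gamma_{nq}$.

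Next I would compute $V(S_2)$ from Lemma~\ref{lem:asymptS} just as \eqref{asymptVphi} was obtained for $V(\varphi)$: applying $V(Z)=Z'(T)+HZ(T)$ to the formula for $S_2(x,\lambda)$ gives
$$
V(S_2)=\tfrac{i}{2}\bigl(-i\rho\exp(i\rho T)B_{21}^0[I]_\beta+i\rho\exp(-i\rho T)B_{11}^0[I]_\beta\bigr)(D_2(\rho))^{-1}+O(\rho^0\exp(|\tau|T))(D_2(\rho))^{-1}.
$$
Combined with \eqref{asymptVphi}, $V(\varphi)=-\tfrac{\rho}{2}\bigl(\exp(i\rho T)B_{22}^0[I]_\beta+\exp(-i\rho T)B_{12}^0[I]_\beta\bigr)(D_1(\rho))^{-1}$, we get
$$
M(\rho^2)=D_1(\rho)\bigl(\exp(i\rho T)B_{22}^0[I]_\beta+\exp(-i\rho T)B_{12}^0[I]_\beta\bigr)^{-1}\bigl(\rho\exp(i\rho T)B_{21}^0[I]_\beta-\rho\exp(-i\rho T)B_{11}^0[I]_\beta\bigr)(D_2(\rho))^{-1}.
$$
On $\gamma_{nq}$ we have $\rho=\rho_{nq}^0+O(\delta)$, and the structure $\{\mu_{1s}/2\}$ controls which exponentials $\exp(\pm i\rho T)$ blow up or decay. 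Since $B_{kj}^0$ are diagonal, the middle inverse is block-diagonal and the whole product is diagonal in the coordinates $s\in J_q$ versus $s\notin J_q$; for $s\notin J_q$ the entry is $O(n^{-\beta})$ (it is $o(1)$ because $\rho_{nq}^0$ is not near a pole of that coordinate, and one must sharpen this to $O(n^{-\beta})$ using \eqref{relbeta} and the distance of $\rho_{nq}^0$ to the $s$-th zeros), while for $s\in J_q$ one extracts the residue factor. Using $\beta_{2js}^0=\exp(i\pi\mu_{js})\beta_{1js}^0$ from \eqref{relbeta}, one identifies the scalar residue of the $s$-th diagonal entry of $M$ at $\rho_{ns}^0$ (more precisely, the contour integral over $\gamma_{nq}$) as $\tfrac{\pi n}{T}\rho^{\mu_{1s}-\mu_{2s}}\theta_s(1+O(n^{-\beta}))$ with $\theta_s=-\frac{\beta_{11s}^0}{iT\beta_{12s}^0}(1-\exp(-2\pi i\nu_s))$, the prefactors $D_1(\rho)(\cdots)(D_2(\rho))^{-1}$ assembling exactly into the claimed matrix $\frac{\pi n}{T}D_1(\pi n/T)(A_q+O(n^{-\beta}))(D_2(\pi n/T))^{-1}$ after replacing $\rho$ by $\pi n/T$ inside the $D_j$ at the cost of an $O(n^{-\beta})$ error. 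The residue computation itself is a scalar exercise: differentiate the denominator $\exp(i\rho T)\beta_{22s}^0+\exp(-i\rho T)\beta_{12s}^0$ at its zero $\rho_{ns}^0$, which produces the factor $iT$ and the combination $(1-\exp(-2\pi i\nu_s))$ via \eqref{relbeta}; the nonvanishing of $\theta_s$ follows from $\nu_s\notin\mathbb{N}$.

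The main obstacle is bookkeeping the $[I]_\beta$ error factors through the matrix inversion uniformly on $\gamma_{nq}$, and in particular showing that the off-diagonal blocks and the $s\notin J_q$ diagonal entries of $M(\rho^2)$ contribute only $O(n^{-\beta})$ after multiplication by the (growing) factors $D_1(\rho)$ on the left and $(D_2(\rho))^{-1}$ on the right — one has to verify that these potentially large conjugating factors do not amplify the error beyond $O(n^{-\beta})$, which works because the surviving exponential on $\gamma_{nq}$ is bounded and the relevant entries of $D_1(\rho)[I]_\beta (D_2(\rho))^{-1}$ stay controlled. A secondary subtlety is justifying the contour-integral identity $\sum_{s\in J_q}m_{ns}^{-1}\alpha_{ns}=\frac{1}{2\pi i}\oint M(\lambda)\,d\lambda$ when some eigenvalues inside $\gamma_{nq}$ coincide — but this is precisely why the $m_{ns}^{-1}$ weights appear, and it follows from \eqref{Mseries} together with the fact that all poles are assumed simple, so each coincident pole is counted once by the integral.
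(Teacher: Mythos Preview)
Your approach is essentially identical to the paper's: the same contour-integral representation $\sum_{s\in J_q} m_{ns}^{-1}\alpha_{ns}=\frac{1}{2\pi i}\oint_{\gamma_{nq}}2\rho M(\rho^2)\,d\rho$, the same derivation of $V(S_2)$ from Lemma~\ref{lem:asymptS}, comparison of $M(\rho^2)$ with the diagonal leading part $M_0(\rho)$, and the scalar residue computation via \eqref{relbeta}. Two computational slips to fix: from Lemma~\ref{lem:asymptS} with $\nu=1$ the second term carries $(-i\rho)^1=-i\rho$, so $V(S_2)=\tfrac{\rho}{2}\bigl(\exp(i\rho T)B_{21}^0[I]_\beta+\exp(-i\rho T)B_{11}^0[I]_\beta\bigr)(D_2(\rho))^{-1}$ (the paper's \eqref{asymptVS}), with a plus sign rather than the minus in your $M(\rho^2)$ formula; and the factors of $\rho$ from $V(\varphi)$ and $V(S_2)$ cancel in $-(V(\varphi))^{-1}V(S_2)$, so no stray $\rho$ should remain inside the brackets of $M(\rho^2)$.
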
                  

\begin{proof}
Using Lemma~\ref{lem:asymptS}, we obtain
\begin{equation} \label{asymptVS}
 	V(S_2) = \frac{\rho}{2} \bigl( \exp(i \rho T) B_{21}^0 [I]_{\be} + \exp(-i \rho T) B_{11}^0 [I]_{\be} \bigr) (D_2(\rho))^{-1}, \quad |\rho| \to \iy.
\end{equation}
The relations \eqref{asymptrho}, \eqref{relPhi}, \eqref{asymptVphi} and \eqref{asymptVS} imply 
\begin{multline} \label{asymptM}
 	M(\rho^2) = -(V(\vv))^{-1} V(S_2) = D_1(\rho) \bigl( \exp(i \rho T) B_{22}^0 + \exp(-i \rho T) B_{12}^0 + O(\exp(|\tau|T) \rho^{-\beta}) \bigr)^{-1} \\
 	\cdot \bigl( \exp(i \rho T) B_{21}^0 + \exp(-i \rho T) B_{11}^0 + O(\exp(|\tau|T) \rho^{-\beta})\bigr) (D_2(\rho))^{-1}, \quad |\rho| \to \iy.
\end{multline}
The following function
$$
 	M_0(\rho) = D_1(\rho) \bigl( \exp(i \rho T) B_{22}^0 + \exp(-i \rho T) B_{12}^0 ) \bigr)^{-1} 
 	\bigl( \exp(i \rho T) B_{21}^0 + \exp(-i \rho T) B_{11}^0 \bigr) (D_2(\rho))^{-1}.
$$
is meromorphic for $\mbox{Re}\, \rho > 0$.

Fix $q = \overline{1, m}$. By virtue of the residue theorem 
\begin{equation} \label{smM1}
   \sum\limits_{s \in J_q} m_{ns}^{-1} \al_{ns} = \sum\limits_{s \in J_q}  m_{ns}^{-1} \Res_{\rho = \rho_{ns}} (2 \rho M(\rho^2)) = 
   \frac{1}{2 \pi i} \int_{\ga_{nq}} (2 \rho M(\rho^2)) \, d \rho
\end{equation}
for sufficiently large $n$. 
It follows from \eqref{asymptrho} and \eqref{asymptM}, that
\begin{equation} \label{smM2}
   \frac{1}{2 \pi i} \int_{\ga_{nq}}(2 \rho M(\rho^2)) \, d \rho - \frac{1}{2 \pi i} \int_{\ga_{nq}} (2 \rho M_0(\rho)) \, d \rho = D_1\Bigl(\frac{\pi n}{T}\Bigr) O(n^{1-\beta}) \Bigl(D_2\Bigl(\frac{\pi n}{T}\Bigr)\Bigr)^{-1}.	
\end{equation}
The integral of $M_0(\rho)$ can be calculated by the residue theorem. Note that 
the matrix function $M_0(\rho)$ is diagonal. The $s$-th diagonal element $M_{0, ss}(\rho)$ has the only simple pole
$\rho_{nq}^0$ inside the contour $\ga_{nq}$, if $s \in J_q$, and it is analytic otherwise.
Using \eqref{asymptrho} and \eqref{relbeta}, we calculate
\begin{multline*}
 	\Res_{\rho = \rho_{nq}^0} (2 \rho M_{0, ss}(\rho)) = \lim_{\rho \to \rho_{nq}^0} (\rho - \rho_{nq}^0) \cdot 2 \rho^{1 + \mu_{s1} - \mu_{s2}} 
 	\frac{\exp(i \rho T) \beta_{21s}^0 + \exp(-i \rho T) \beta_{11s}^0 }{\exp(i \rho T) \beta_{22s}^0 + \exp(-i \rho T) \beta_{12s}^0} \\
 	= 2 \Bigl(\frac{\pi n}{T}\Bigr)^{1 -2 \nu_s} (1 + O(n^{-1})) \frac{\beta_{11s}^0 (\exp(i \rho_{nq}^0 T) \exp(i \pi \mu_{1s}) + \exp(-i \rho_{nq}^0 T))}{i T \beta_{12s}^0 
 	(\exp(i \rho_{nq}^0 T) \exp(i \pi \mu_{2s}) - \exp(-i \rho_{nq}^0 T))}
 	\\ = \theta_s \Bigl( \frac{\pi n}{T}\Bigr)^{1 - 2 \nu_s} (1 + O(n^{-1})).
\end{multline*}  
Combining this result with \eqref{smM1} and \eqref{smM2}, we arrive at \eqref{asymptal}.
\end{proof}

\begin{cor}
The matrix $\om$ is uniquely determined by the weight matrices $\{ \al_p \}_{p \ge 1}$ or by the Weyl matrix $M(\la)$.
\end{cor}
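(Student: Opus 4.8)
The plan is to recover, for each $q = \overline{1,m}$, the exponent $\nu_q$ (hence $\om_q = \nu_q^2 - \tfrac14$, hence the whole diagonal matrix $\om$) as a logarithmic growth rate of the $q$-th diagonal entry of a spectral object as the spectral parameter tends to infinity. By \eqref{Mseries} the Weyl matrix $M(\la)$ is reconstructed from the spectral data $\{\la_p,\al_p\}_{p\ge1}$, and the eigenvalues are precisely the poles of $M$; so it suffices to show that $\om$ is determined by $M(\la)$, and the formulation in terms of the weight matrices follows. (One can also argue directly from Theorem~\ref{thm:weight}; see below.)

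First I would specialise the asymptotics of $M(\rho^2)$ from \eqref{asymptM} (which itself follows from Lemma~\ref{lem:asymptS} and \eqref{relPhi}) to the ray $\rho = i\sigma$, $\sigma \to +\iy$, so that $\tau = \sigma$, $\exp(i\rho T) = \exp(-\sigma T) \to 0$ and $\exp(-i\rho T) = \exp(\sigma T) \to \iy$. The diagonal matrices $B_{11}^0$ and $B_{12}^0$ have nonzero entries $\beta_{11q}^0,\beta_{12q}^0$ (see Section~2), hence are invertible; factoring $\exp(\sigma T)$ out of each bracket in \eqref{asymptM} and inverting yields
\[
  M(-\sigma^2) = D_1(i\sigma)\bigl( (B_{12}^0)^{-1} B_{11}^0 + O(\sigma^{-\beta}) \bigr) (D_2(i\sigma))^{-1}, \qquad \sigma \to +\iy .
\]
Since $(B_{12}^0)^{-1} B_{11}^0 = \mbox{diag}\{ \beta_{11q}^0/\beta_{12q}^0 \}_{q=1}^m$ and $\mu_{1q} - \mu_{2q} = -2\nu_q$, the $q$-th diagonal entry is
\[
  M_{qq}(-\sigma^2) = (i\sigma)^{-2\nu_q}\bigl( \beta_{11q}^0/\beta_{12q}^0 + O(\sigma^{-\beta}) \bigr), \qquad \sigma \to +\iy .
\]

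Then I would extract $\nu_q$. Because $\beta_{11q}^0/\beta_{12q}^0 \ne 0$ and $\beta > 0$, taking moduli gives $|M_{qq}(-\sigma^2)| = \sigma^{-2\nu_q}\bigl( |\beta_{11q}^0/\beta_{12q}^0| + o(1)\bigr)$, whence
\[
  \nu_q = -\frac12\,\lim_{\sigma \to +\iy}\frac{\log |M_{qq}(-\sigma^2)|}{\log \sigma}, \qquad \om_q = \nu_q^2 - \frac14, \qquad q = \overline{1,m},
\]
which determines $\om$. The weight-matrix version is parallel: the eigenvalue asymptotics \eqref{asymptrho} let one group the $\la_{nq}$ into the clusters enclosed by the contours $\ga_{nq}$, and so group the corresponding normalized residues; Theorem~\ref{thm:weight} then shows that for the cluster containing an index $q$ the $q$-th diagonal entry of $\sum_{s\in J_q} m_{ns}^{-1}\al_{ns}$ equals $\theta_q (\pi n/T)^{1-2\nu_q}(1 + o(1))$ with $\theta_q \ne 0$, so $1 - 2\nu_q = \lim_{n}\log|\cdot|/\log n$, and again $\om_q = \nu_q^2 - \tfrac14$.

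I expect no essential difficulty once these asymptotics are in hand; the points requiring care are the non-vanishing of the leading coefficients $\beta_{11q}^0/\beta_{12q}^0$ (resp. $\theta_q$), guaranteed by $\nu_q \notin \mathbb Z$, and, in the weight-matrix formulation, the observation that the clustering of the eigenvalues — hence the partition of the $\al_p$ — is itself readable off from \eqref{asymptrho}. Conceptually, each $\nu_q$ enters the relevant asymptotics as the exponent of a power of $\rho$ (or of $n$) with a nonzero coefficient, and distinct $\nu_q$ give distinct exponents, so every $\nu_q$ is pinned down as a logarithmic growth rate.
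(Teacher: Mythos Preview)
Your proposal is correct and, for the weight-matrix part, coincides with the paper's argument: both you and the paper read off $\nu_q$ from the growth exponent $1-2\nu_q$ of the $q$-th diagonal entry in Theorem~\ref{thm:weight}, using that $\theta_q\ne 0$.

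For the Weyl-matrix part you take a different but equally valid route. The paper stays with the residue/contour picture: it computes $\frac{1}{2\pi i}\int_{\gamma_{nq}} 2\rho M(\rho^2)\,d\rho$, which by \eqref{smM1}--\eqref{smM2} reproduces the same sums as in Theorem~\ref{thm:weight}, and then extracts $\nu_q$ from the diagonal asymptotics. You instead evaluate $M$ directly along $\rho=i\sigma$, where $\exp(i\rho T)$ is negligible against $\exp(-i\rho T)$, obtaining $M_{qq}(-\sigma^2)\sim (i\sigma)^{-2\nu_q}\beta_{11q}^0/\beta_{12q}^0$ and recovering $\nu_q$ as a logarithmic growth rate. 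Your approach is slightly more elementary (no contours, no clustering needed) and gives $\nu_q$ from a single ray; the paper's approach has the virtue of treating the two formulations uniformly via Theorem~\ref{thm:weight}.

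One small point: your opening reduction ``$\{\al_p\}\Rightarrow M(\la)$ via \eqref{Mseries}, so it suffices to treat $M$'' is not quite justified as stated, since \eqref{Mseries} also requires the eigenvalues $\la_p$, which the corollary does not give you. This does not affect your argument, because you then supply the direct weight-matrix proof via Theorem~\ref{thm:weight}; but the reduction sentence should be dropped or qualified. (The paper has the same tacit dependence on knowing the clustering of the $\al_p$, which it does not spell out either.)
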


Indeed, the numbers $\nu_q$, $q = \overline{1, m}$, can be found from the diagonal elements 
of the sums $\sum\limits_{s \in J_q} m_{ns}^{-1} \al_{ns}$ or the integral
$\frac{1}{2 \pi i} \int\limits_{\ga_{nq}} (2 \rho M(\rho)) \, d\rho$.

{\bf Acknowledgments}. This work was supported by Grant 1.1436.2014K
of the Russian Ministry of Education and Science and by Grants 15-01-04864 and 16-01-00015
of Russian Foundation for Basic Research.

\medskip

\medskip

\noindent Natalia Bondarenko \\
Department of Mechanics and Mathematics, Saratov State University, \\
Astrakhanskaya 83, Saratov 410012, Russia, \\
e-mail: {\it BondarenkoNP@info.sgu.ru}

\end{document}